\newcommand{\1}{\mathbb{1}}
\newcommand{\N}{\mathbb{N}}
\newcommand{\C}{\mathbb{C}}
\newcommand{\D}{\mathbb{D}}
\newcommand{\K}{\mathcal{K}}
\newcommand{\ds}{\displaystyle}
\newtheorem{thank}{\ \ \ Acknowledgment}
\newcounter{tictac}
\def\1{\,\rlap{\mbox{\small\rm 1}}\kern.15em 1}
\def\build#1_#2^#3{\mathrel{\mathop{\kern 0pt#1}\limits_{#2}^{#3}}}
\def\tend#1#2{\build\hbox to 12mm{\rightarrowfill}_{#1\rightarrow #2}^{ }}
\def\converge#1#2#3#4{\build\hbox to
#1mm{\rightarrowfill}_{#2\rightarrow #3}^{\hbox{\scriptsize #4}}}
\newtheorem{thm}{Theorem}[section]
\newtheorem{prop}[thm]{Proposition}
\newtheorem{lemm}[thm]{Lemma}
\newtheorem{conj}[thm]{Conjecture}
\newtheorem{rem}[thm]{Remark}
\newcommand{\beq}{\begin{equation}}
\newcommand{\eeq}{\end{equation}}
\begin{document}
\title[On the spectrum of stochastic perturbations of the shift]
{On the spectrum of stochastic perturbations of the shift and Julia sets}
\author{E. H. El Abdalaoui}
\address { Department of Mathematics, University
of Rouen, LMRS, UMR 60 85, Avenue de l'Universit\'e, BP.12, 76801
Saint Etienne du Rouvray - France}
\email{elhoucein.elabdalaoui@univ-rouen.fr }
\author {A. Messaoudi }
\address{Departamento de Matem\'atica, IBILCE-UNESP, Rua Cristov\~o Colombo,
2265, CEP 15054-0000, S\~ao
Jos\'e de Rio Preto-SP, Brasil}
\email{messaoud@ibilce.unesp.br}
\footnote{Research partially
supported by French-Brasilian Cooperation (French CNRS and Brasilian CNPq)
and Capes-Cofecub Project 661/10 .
The second author was Supported by Brasilian CNPq grant 305043/2006-4}
\maketitle
{\renewcommand\abstractname{Abstract}

\date{12 september  2011}

\begin{abstract}
We extend the Killeen-Taylor study in \cite{KT} by investigating in different Banach spaces 
($\ell^\alpha(\N), c_0(\N),c_c(\N)$) 
the point, continuous and residual spectra of stochastic perturbations of the shift operator associated to 
the stochastic adding machine in base $2$ and in Fibonacci base. For the base $2$, the spectra are connected to the  Julia set of a quadratic map.
In the Fibonacci case, the spectra involve  the  Julia set of  an endomorphism of $\C^2$.

\vspace{8cm}

\hspace{-0.7cm}{\em AMS Subject Classifications} (2000): 37A30, 37F50, 47A10, 47A35.\\

{\em Key words and phrases:} Markov operator, Markov process, transition operator, 
stochastic perturbations of the shift, stochastic adding machine, Julia sets,   residual spectrum, continuous spectrum.\\
\end{abstract}
\thispagestyle{empty}
\newpage
\section{\bf Introduction}
In this paper, we study  in detail the spectrum of some stochastic perturbations of the shift operator 
introduced by Killeen and Taylor in \cite{KT}.
We focus our study on large Banach spaces for which we complete the Killeen-Taylor study. We investigate also
 the case of Fibonacci base, but in this case, we are not able to compute the residual and continuous spectra 
exactly.\\
 We recall that   
in \cite {KT},
 Killeen and Taylor defined the stochastic adding machine as a stochastic perturbation of the shift
in the following way: let $N$ be a nonnegative integer number written in base $2$ as
$N= \sum_{i=0}^{k(N)} \varepsilon_{i}(N)2^{i}$ where
$\varepsilon_{i}(N)=0$ or $1$ for all $i.$ It is known that there
exists an algorithm  that computes the digits of $N+1$. This
algorithm can be described by introducing an auxiliary
 binary "carry" variable $c_i(N)$ for each digit $\varepsilon_{i}(N)$
 by the following manner:

Put $c_{-1}(N+1)=1$ and
$$\varepsilon_{i}(N+1)= \varepsilon_{i}(N)+ c_{i-1} (N+1) {\rm {\quad mod \quad}} (2)$$
$$c_i (N+1) = \left[{\frac{\varepsilon_{i}(N)+ c_{i-1}(N+1)}{2}}\right]$$
where $i \geq 0$ and $[z]$ denote the integer part of $z \in
\mathbb{R}_{+}.$

Let $\{e_{i}(n): i \geq 0, n \in \mathbb{N}\}$ be an independent,
identically distributed family of random variables  which take the value $0$ with probability
$1-p$ and the value $1$ with probability $p$. Let $N$ be an integer.
Given a sequence  $(r_{i}(N))_{i \geq 0} $  of $0$ and $1$ such that
$r_{i}(N)=1$ for finitely many indices $i$, we consider the
sequences  $(r_{i}(N+1))_{i \geq 0}$ and $(c'_{i}(N+1))_{i \geq -1}$
 defined by
  $c'_{-1}(N+1)=1$ and for all  $i \geq 0$
 $$r_{i}(N+1)= r_{i}(N)+ e_{i}(N)c'_{i-1}(N+1)
{\rm {\quad mod \quad}} (2)$$
 $$ c'_i (N+1) = \left[\frac{r_{i}(N)+
e_{i}(N)c'_{i-1}(N+1)}{2}\right],$$

With this we have that a number $\sum_{i=0}^{+\infty}
 r_{i}(N)2^{i}$ transitions to a number \linebreak$\sum_{i=0}^{+\infty}
 r_{i}(N+1)2^{i}$.
 In particular, an integer  $N$ having
a binary representation of the form $\varepsilon_{n}\ldots
\varepsilon_{k+1}0 \underbrace{11\ldots 11}_{k}$ transitions to
$\varepsilon_{n}\ldots \varepsilon_{k+1}1 \underbrace{00\ldots
00}_{k}$ with probability $p^{k+1}$ and a number having binary
representation of the form $\varepsilon_{n}\ldots \varepsilon_{k}
\underbrace{11\ldots 11}_{k}$ transitions to $\varepsilon_{n}\ldots
\varepsilon_{k} \underbrace{00\ldots 00}_{k}$ with probability
$p^{k}(1-p).$
Equivalently, we obtain a Markov  process $\psi(N)$ with state space $\mathbb{N}$ by $\psi(N)= \sum_{i=0}^{+\infty}r_i(N)2^{i}$. The 
corresponding transition operator is denoted by $S_p$ and given in Figure 2.
 
For $p=1$ the transition operator equals the shift operator (cf. Figure 2), hence the stochastic adding machine can be seen as  a stochastic perturbation of the shift operator. It is  also a
  model of Weber law in the context of counter and pacemarker errors. This law is
used in biology and psychophysiology \cite{KT2}.

In [KT], P.R. Killeen and J. Taylor studied the  spectrum of  the
transition operator $S_p$ (of $\psi(N)$)  on  $\ell^{\infty}$. They
proved that the spectrum $\sigma(S_p)$ is equal to the filled Julia
set of the quadratic map $f: \mathbb{C} \mapsto \mathbb{C} $ defined
by: $f(z)= (z- (1-p))^2 / p^2$,  i.e:
$\sigma (S_p)= \{z \in \mathbb{C},\; (f^{n}(z))_{n \geq 0} \mbox { is bounded } \}$
where $f^n$ is the $n$-th iteration of $f$.

In \cite{Messaoudi-Smania}, Messaoudi and Smania defined the stochastic adding machine
 in the Fibonacci base. The corresponding transition operator is given in Figure 6.
Their procedure can be extended to a large class of  adding machine and  is given by  the following manner. Consider  the
Fibonacci sequence $(F_n)_{n \geq 0}$ given by the relation
$$F_0= 1, F_1=2,\;
 F_n= F_{n-1}+ F_{n-2} \;\; \forall n \geq 2.$$
Using the greedy algorithm, we can write every
nonnegative integer $N$ in a unique way as $\displaystyle N= \sum_{i=0}^{k(N)}
\varepsilon_{i}(N)F_i$ where $\varepsilon _{i}(N)= 0$ or $1$ and
$\varepsilon _{i}(N)\varepsilon _{i+1}(N) \ne 11, \;$ for all $i \in
\left\{0,\cdots,k(N)-1\right\}$ (see \cite{Z}).
It is known that the addition of $1$ in the Fibonacci base (adding machine) is
recognized by a finite state automaton (transductor).
In \cite{Messaoudi-Smania}, the authors defined the stochastic adding machine by introducing
a $``$ probabilistic transductor $\textquotedblright$.
 They also computed the point spectrum  of the
transition operator acting in $\ell^{\infty}$ associated to the stochastic adding machine with
respect to the base $(F_n)_{n \geq 0}$. In particular, they showed that  the point
spectrum $\sigma_{pt}(S_p)$ in $\ell^{\infty}$
 is connected to the filled Julia set $J (g)$ of the function $g:
\mathbb{C}^2 \mapsto \mathbb{C}^2 $ defined by:
$$g(x,y)= (\frac{1}{p^{2}}(x- 1+p)(y- 1+p), x).$$
Precisely, they proved that
$$\sigma_{pt}(S_p)=
 \K_p=  \{ \lambda \in
\mathbb {C} \;  (q_n (\lambda))_{n \geq 1} \mbox { is bounded }\},$$
 where $q_{F_{0}}(z)= z,\; q_{F_{1}} (z)=z^2,\; q_{F_{k}} (z)= \ds \frac{1}{p}q_{F_{k-1}} (z)q_{F_{k-2}} (z)-\ds\frac{1-p}{p},$ for all $k \geq 2$
and for all nonnegative integers $n$, we have $q_n (z)= q_{F_{k_{1}}} \ldots q_{F_{k_{m}}} $ where $ F_{k_{1}}+ \cdots+ F_{k_{m}}$ is the Fibonacci representation of $n$.

In particular, $\sigma_{pt}{(S_{p})}$ is contained in  the set
\begin{eqnarray*}
 \mathcal {E}_p &= &
\{
\lambda \in \mathbb{C} \; \vert \; (q_{F_{n}} (\lambda))_{n \geq 1} \mbox { is bounded } \}\\
                & = &
\{
\lambda \in \mathbb{C} \; \vert \; ( \lambda_1, \lambda) \in
J (g)\}
\end{eqnarray*}
where $\lambda_1= 1-p
+\frac{ (1- \lambda- p)^2}{p}.$\\

\begin{center}
\includegraphics[scale=0.6]{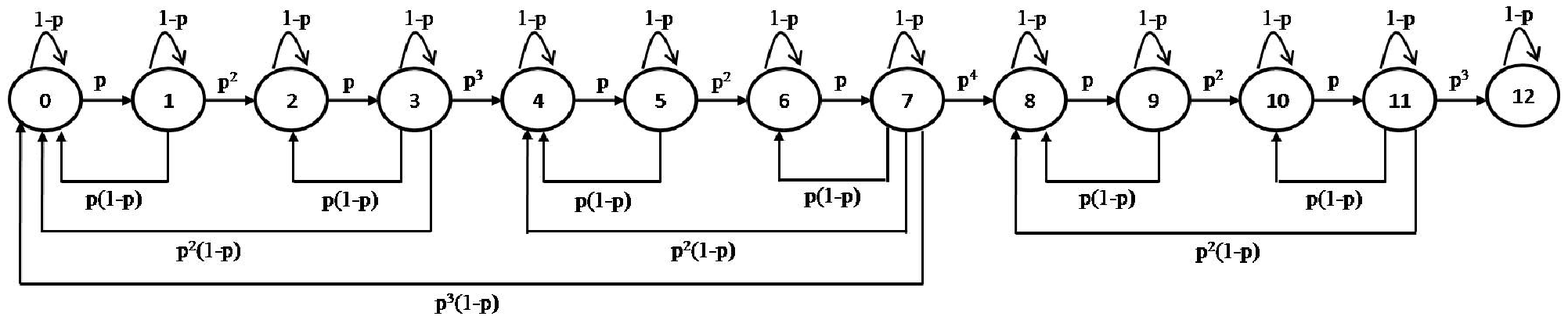}\\
{\footnotesize {Fig.1. Transition graph of stochastic adding machine in base 2}}
\end{center}

Here we investigate the spectrum of the stochastic adding machines in base $2$ and in the Fibonacci 
base in different Banach spaces. In particular,  
we compute exactly the point, continuous and residual spectra of the stochastic adding machine
in base $2$ for the Banach spaces $c_0$, $c$, $\ell^{\alpha},\; \alpha \geq 1$.

For the Fibonacci base, we improve the result in \cite{Messaoudi-Smania} by 
proving that the spectrum of $S_p$ acting on $\ell^{\infty}$ contain  ${\mathcal{E}}_p$. The same result will be proven  
for the Banach spaces  $c_0$, $c$ and $\ell^{\alpha}, \alpha \geq 1$.

The  paper is organized as follows. In section 2, we give some basic facts on  spectral theory. 
In section 3, we state our main results (Theorems 1, 2 and 3). Section 4 contains  
the proof in the case of the base 2 and finally, in  
section 5,  we present the proof in the case of the Fibonacci base.

\section{\bf Basic facts from the spectral theory of operators (see
for instance \cite{Halmos}, \cite{Rudin},\cite{Schechter}, \cite{yoshida})}
Let $E$ be a complex Banach space and $T$ a bounded operator on it.
The spectrum of $T$, denoted by $\sigma(T)$, is the subset of complex numbers
$\lambda $ for which $T-\lambda Id_E$ is not an isomorphism ($Id_E$ is the
identity maps).

As usual we point out that if $\lambda$ is in $\sigma(T)$ then one of the
following assertions hold:
\begin{enumerate}
  \item $T-\lambda Id_E$ is not injective. In this case we say that $\lambda$ is in the point spectrum denoted by
$\sigma_{pt}(T)$.
  \item $T-\lambda Id_E$ is injective, not onto and has dense range. We say
that $\lambda$ is in the continuous spectrum denoted by $\sigma_c(T)$.
  \item $T-\lambda Id_E$ is injective and does not have dense range. We
say that $\lambda$ is in residual spectrum of T denoted by $\sigma_r(T)$.
\end{enumerate}
It follows that $\sigma(T)$ is the disjoint union
\[
    \sigma(T) = \sigma_{pt} (T) \cup \sigma_c (T) \cup \sigma_r (T).
\]
It is well known and it is an easy consequence of Liouville Theorem that
the spectrum of any bounded operator is a non empty compact set of $\C$. There is a connection
 between the spectrum of $T$ and  the spectrum of
the dual operator $T'$ acting on the dual space $E'$  by
$T'~~:~~\phi \mapsto \phi \circ T.$ In particular, we have

\begin{prop}[Phillips Theorem]{}\label{Phillips} Let $E$ be a Banach space and $T$
a bounded operator on it, then $
\sigma(T)=\sigma(T').$
\end{prop}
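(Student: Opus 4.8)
The plan is to show the equivalence "$T-\lambda\,Id_E$ is an isomorphism of $E$" $\iff$ "$T'-\lambda\,Id_{E'}$ is an isomorphism of $E'$", since the spectrum is by definition the complement of the resolvent set, and the claimed equality of spectra is equivalent to the equality of resolvent sets. By replacing $T$ with $T-\lambda\,Id_E$ (and noting $(T-\lambda\,Id_E)'=T'-\lambda\,Id_{E'}$, because dualization is linear and sends $Id_E$ to $Id_{E'}$), it suffices to treat the case $\lambda=0$: I will prove that $T$ is invertible in $\mathcal{B}(E)$ if and only if $T'$ is invertible in $\mathcal{B}(E')$.

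First I would record the elementary functorial facts about dualization: $(ST)'=T'S'$ and $(Id_E)'=Id_{E'}$. From these, if $T$ is invertible with bounded inverse $T^{-1}$, then applying $(\cdot)'$ to $T^{-1}T=TT^{-1}=Id_E$ gives $T'(T^{-1})'=(T^{-1})'T'=Id_{E'}$, so $T'$ is invertible with $(T')^{-1}=(T^{-1})'$. This direction is immediate. The substantive direction is the converse: assume $T'$ is invertible; I must show $T$ is invertible. For this I would establish two things — that $T$ has closed range and is injective (hence a linear homeomorphism onto its range, by the open mapping theorem applied to the Banach space $T(E)$ once range-closedness is known, or more directly by producing a lower bound $\|Tx\|\ge c\|x\|$), and that $T(E)=E$.

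The key technical inputs are the standard duality dictionary between an operator and its adjoint, which I would either cite from the references listed (\cite{Halmos}, \cite{Rudin}, \cite{Schechter}, \cite{yoshida}) or prove via the Hahn–Banach theorem: (i) $T'$ injective $\iff$ $T$ has dense range; (ii) $T'$ surjective $\iff$ $T$ is bounded below (i.e. $\|Tx\|\ge c\|x\|$ for some $c>0$, equivalently $T$ injective with closed range); and symmetrically (iii) $T$ injective with dense range does not by itself give surjectivity, so one genuinely needs (ii). Granting these: $T'$ invertible implies $T'$ is both injective and surjective; by (i), $T$ has dense range; by (ii), $T$ is bounded below, so $T$ is injective with closed range; a dense closed subspace is the whole space, hence $T(E)=E$; therefore $T$ is a continuous bijection of $E$ onto $E$, and by the open mapping theorem $T^{-1}$ is bounded, so $T$ is invertible. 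Applying this with $T$ replaced by $T-\lambda\,Id_E$ for every $\lambda\in\C$ yields $\rho(T)=\rho(T')$, hence $\sigma(T)=\sigma(T')$.

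The main obstacle is proving the duality statement (ii), that surjectivity of $T'$ forces $T$ to be bounded below: this is the one place where a soft Hahn–Banach argument does not suffice and one needs a quantitative/closed-graph-type argument (one shows $T'$ surjective $\Rightarrow$ $T'$ is bounded below on $E'$ by the open mapping theorem, and then transfers this lower bound back to $T$ via the norming property $\|Tx\|=\sup_{\|\phi\|\le 1}|\phi(Tx)|=\sup_{\|\phi\|\le1}|(T'\phi)(x)|$). Everything else — the functoriality of $(\cdot)'$, the Hahn–Banach separation giving the "dense range $\leftrightarrow$ adjoint injective" equivalence, and the final open-mapping step — is routine. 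Since the paper explicitly offers this as a cited proposition ("Phillips Theorem"), it is legitimate to present the above as the argument and refer to \cite{yoshida} or \cite{Schechter} for the two adjoint-duality lemmas, but I would at least sketch the Hahn–Banach proof of (i) and the open-mapping proof of (ii) for completeness.
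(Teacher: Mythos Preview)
The paper does not prove this proposition: it is listed in Section~2 among the ``basic facts from the spectral theory of operators'' and simply cited from the standard references \cite{Halmos}, \cite{Rudin}, \cite{Schechter}, \cite{yoshida}, with no argument given. Your proof proposal is the standard one and is correct; in particular your identification of the nontrivial direction (invertibility of $T'$ implies invertibility of $T$) and the need for the open mapping theorem to get the lower bound from surjectivity of $T'$ is exactly right. Since the paper treats this as background material rather than something to be proved, there is nothing to compare against beyond noting that your sketch is a faithful account of the classical argument one finds in the cited sources.
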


\noindent We also have  a classical  relation between the point and residual spectra of $T$ and the point spectrum of $T'$.

\begin{prop}\label{residual}
For a bounded operator $T$ we have
$$\sigma_r(T)\subset \sigma_{pt}(T')\subset \sigma_r(T)\cup \sigma_{pt}(T).$$
In particular,
if $\sigma_{pt}(T)$ is an empty set
then $$\sigma_r(T)=\sigma_{pt}(T').$$
\end{prop}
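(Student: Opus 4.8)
The plan is to work directly from the definitions of the three parts of the spectrum given above, together with the standard duality identities relating the injectivity/denseness of $T-\lambda\,Id_E$ to the injectivity/denseness of $T'-\lambda\,Id_{E'}$. The two facts I would isolate first are the following annihilator identities, valid for any bounded operator $S$ on a Banach space: (i) $\overline{\Ima{S}}=E$ if and only if $S'$ is injective; and (ii) $S'$ has weak-$*$-dense range whenever $S$ is injective, but more to the point, $\Ima{S}$ is \emph{not} dense precisely when $(\Ima{S})^{\perp}\neq\{0\}$, i.e.\ there is a nonzero $\phi\in E'$ annihilating $\Ima{S}$, which says exactly $S'\phi=0$. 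Applying these with $S=T-\lambda\,Id_E$ (so $S'=T'-\lambda\,Id_{E'}$) converts the three case-distinctions in the definition of $\sigma(T)$ into statements about $T'$.

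For the first inclusion $\sigma_r(T)\subset\sigma_{pt}(T')$: if $\lambda\in\sigma_r(T)$ then $T-\lambda\,Id_E$ is injective but does not have dense range, so by identity (ii) there is a nonzero $\phi\in E'$ with $(T'-\lambda\,Id_{E'})\phi=0$; hence $T'-\lambda\,Id_{E'}$ is not injective, i.e.\ $\lambda\in\sigma_{pt}(T')$. For the second inclusion $\sigma_{pt}(T')\subset\sigma_r(T)\cup\sigma_{pt}(T)$: suppose $\lambda\in\sigma_{pt}(T')$, so there is a nonzero $\phi\in E'$ with $(T'-\lambda\,Id_{E'})\phi=0$; by identity (ii) this means $\Ima{T-\lambda\,Id_E}$ is not dense, so $T-\lambda\,Id_E$ fails to have dense range. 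Now split on whether $T-\lambda\,Id_E$ is injective: if it is, then by definition $\lambda\in\sigma_r(T)$; if it is not, then $\lambda\in\sigma_{pt}(T)$. Either way $\lambda\in\sigma_r(T)\cup\sigma_{pt}(T)$, which is the claim. The final ``in particular'' clause is then immediate: if $\sigma_{pt}(T)=\varnothing$, the chain of inclusions forces $\sigma_r(T)\subset\sigma_{pt}(T')\subset\sigma_r(T)$, hence equality.

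The only real point requiring care — and thus the main (mild) obstacle — is the precise form of the duality dictionary between denseness of range and injectivity of the adjoint, which rests on the Hahn–Banach theorem: a subspace $M\subset E$ is dense if and only if the only $\phi\in E'$ vanishing on $M$ is $\phi=0$. Once this is stated cleanly and applied to $M=\Ima{T-\lambda\,Id_E}$, everything else is a bookkeeping exercise in the trichotomy defining $\sigma(T)$. I would present the Hahn–Banach consequence as a one-line lemma (or simply cite it from \cite{Rudin} or \cite{yoshida}) and then give the two inclusions as above; no computation beyond this is needed.
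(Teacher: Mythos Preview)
Your argument is correct and is exactly the standard proof via the Hahn--Banach consequence that a subspace $M\subset E$ is dense if and only if its annihilator $M^{\perp}\subset E'$ is $\{0\}$. Note, however, that the paper does not supply its own proof of this proposition: it is stated in Section~2 as a classical fact with references to \cite{Halmos}, \cite{Rudin}, \cite{Schechter}, \cite{yoshida}, so there is nothing to compare against beyond observing that your proof is precisely the argument one finds in those sources.
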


\section{\bf Main results.}

 Our main results are stated in the following three theorems.

{\bf Theorem 1}.
The spectrum of the operator $S_p$ acting on $c_0,\; c$ and $\ell^{\alpha}, \; \alpha  \geq 1$
is equal to the filled Julia set $J(f)$ of the quadratic map $f(z)= (z- (1-p))^2 / p^2$.
 Precisely,
 in $c_0$ (resp. $\ell^{\alpha}, \; \alpha  > 1$), the continuous  spectrum of  $S_p$
is equal to   $J(f)$ and the point and residual spectra are empty.
In $c$, the point spectrum is equal $\{1\}$, the residual spectrum is  empty and the continuous spectrum equals $J(f) \backslash \{1\}$.

\vspace{1em}

{\bf Theorem 2}.
In $\ell^1 $, the point spectrum  of $S_p$ is empty.
The residual spectrum  of $S_p$ is not empty and  contains a dense and countable subset of the
Julia set $\partial (J_f)$, i.e. $\bigcup_{n=0}^{+\infty} f^{-n}\{1\} \subset \sigma_{r} (S_p)$. The continuous spectrum is equal to the relative  complement of the
residual spectrum with respect to the filled Julia set $J_f$.

\vspace{1em}

{\bf Theorem 3}.
 The spectra of $S_p$ acting respectively in $\ell^{\infty},\; c_{0},\; c$ and $\ell^{\alpha},\; \alpha \geq 1$, associated to the stochastic Fibonacci adding machines contain the set $\mathcal{E}_{p}= \{ \lambda \in \mathbb{C} \; \vert \; ( \lambda_1, \lambda) \in
J (g)\}$ where $J (g)$ is the filled Julia set of the function $g$ and $\lambda_1=1-p
+\frac{ (1- \lambda- p)^2}{p}.$


 {\bf Conjecture}:  We conjecture that in the case of $\ell^1 $, the residual spectrum of the transition operator associated to the stochastic adding machine in base $2$ is  $\sigma_{r} (S_p)= \bigcup_{n=0}^{+\infty} f^{-n}\{1\} $.
 For Fibonacci stochastic adding machine, we conjecture that the spectra of $S_p$ in the Banach spaces cited in Theorem 3 are  equals to the set $\mathcal{E}_{p}$.
\begin{rem}
 The methods used for the proof of our results can be adapted for a large class of stochastic adding machine given by transductors.
\end{rem}
\begin{rem} 
We point out that from Killeen and Taylor method one may deduce in the case of $\ell^{\infty}$
 that the residual and continuous spectrum is empty. 
On the contrary here we compute directly the residual and continuous spectrum in $\ell^{\alpha}, c_0$ and $c$.
\end{rem}
\begin{center}
\hspace{-9.5 mm}\includegraphics[scale=0.7]{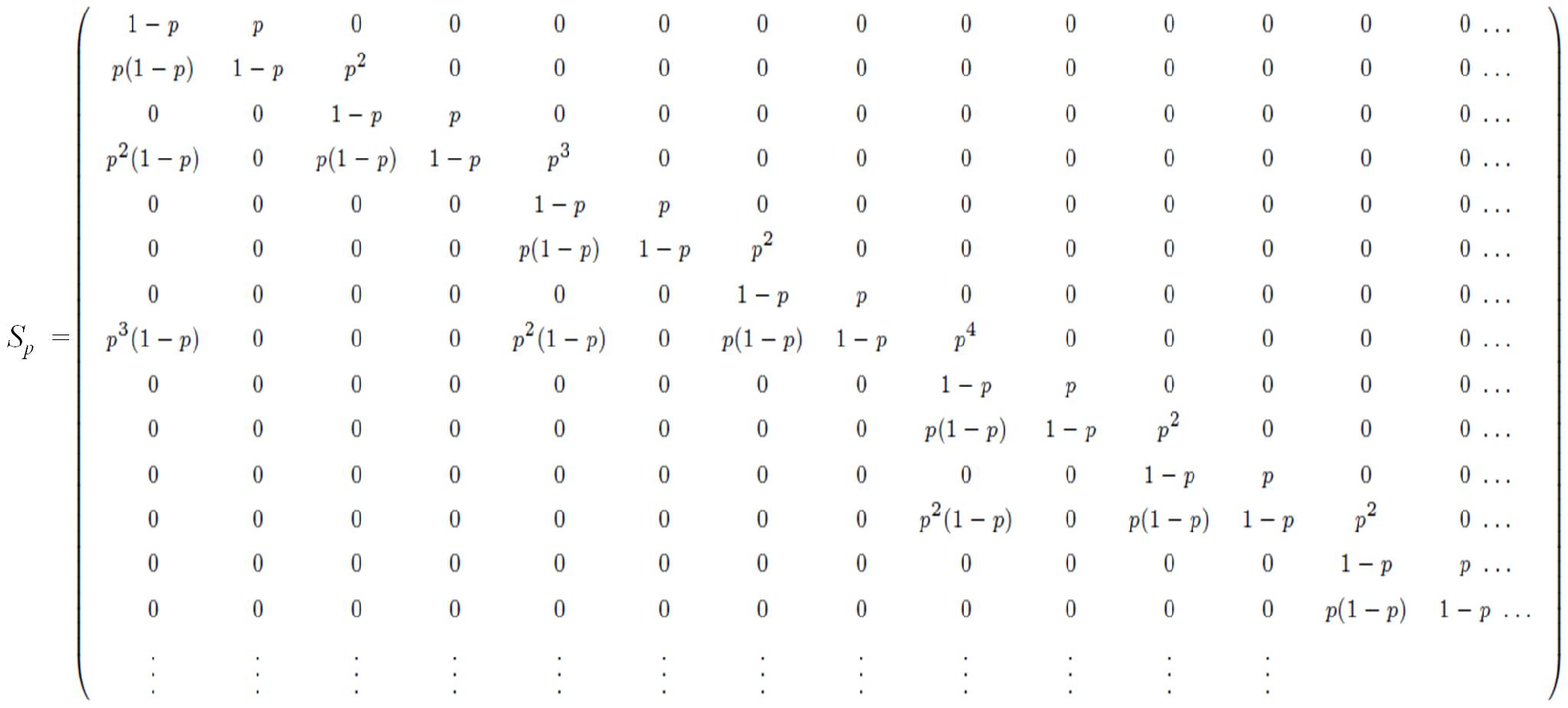}
{\footnotesize {Fig.2. Transition operator of stochastic adding machine in base 2}}
\end{center}
\section{\bf Proof of our main results for stochastic adding machine in base 2.}
We are interested in the spectrum of $S_p$ on three Banach spaces connected by
duality. The space $c_0$  is  the space of complex sequences which
converge to zero,  in other words, the continuous functions on $\N$ vanishing at
infinity. The dual space of $c_0$ is by Riesz Theorem the space of
bounded Borel measures on $\N$ with total variation norm. This space can be
identified with $\ell^1$, the space of summable row vectors. Finally, the
dual space of $\ell^1$ is  $\ell^{\infty}$ the space of
bounded complex sequences.

We are also interested in the spectrum of $S_p$
 as operator on the
space $\ell^{\alpha}$ with $\alpha  > 1$ and also in the space $c$ of complex convergent sequences.


\begin{prop}\label{defia}
The operator $S_p$ (acting on the right) is well defined on the space $X$ where $X \in \{c_0,\; c,\; l^{\alpha},\;
\alpha  \geq 1\}$, moreover $||S_p|| \leq 1$.
\end{prop}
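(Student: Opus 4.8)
The plan is to show that $S_p$ maps each of the spaces $c_0$, $c$, $\ell^\alpha$ ($\alpha\ge 1$) into itself with operator norm at most $1$, by exploiting the fact that $S_p$ is a stochastic matrix acting on the right, i.e. that each row of $S_p$ is a probability vector. First I would read off from the transition description (Figure 1 and Figure 2) the explicit entries of $S_p$: writing $S_p = (s_{N,M})_{N,M\in\N}$, the row indexed by $N$ has finitely many nonzero entries, all nonnegative, summing to $1$ (these are exactly the transition probabilities $p^{k+1}$, $p^k(1-p)$, etc., described in the introduction). Equivalently each column has only finitely many nonzero entries as well, which is what makes the right action $ (xS_p)_M = \sum_N x_N s_{N,M}$ a finite sum for finitely supported $x$ and, more importantly, makes duality arguments available later.

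The key estimate is the following: for a row vector $x=(x_N)_{N\ge 0}$,
\[
 \|xS_p\|_{\ell^1} \;=\; \sum_{M} \Big| \sum_N x_N s_{N,M}\Big|
 \;\le\; \sum_N |x_N| \sum_M s_{N,M}
 \;=\; \sum_N |x_N|
 \;=\; \|x\|_{\ell^1},
\]
so $S_p$ is a contraction on $\ell^1$. For $\ell^\infty$ the dual (left) action uses the column sums; here the relevant fact is that $S_p$ applied to the constant sequence $\1$ gives $\1$ (each row sums to $1$), and more generally $\| \cdot \|_\infty$ is non-expansive because each output coordinate is a convex combination of input coordinates — this gives $\|S_p y\|_\infty\le \|y\|_\infty$ for the action making $S_p$ an operator on $\ell^\infty$. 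The case $\ell^\alpha$, $1<\alpha<\infty$, then follows by Riesz--Thorin interpolation between the $\ell^1$ and $\ell^\infty$ bounds, or directly by Jensen/Hölder applied to the convex combinations row by row:
\[
 \Big|\sum_N x_N s_{N,M}\Big|^\alpha \le \sum_N |x_N|^\alpha s_{N,M},
\]
and then summing over $M$ and using $\sum_M s_{N,M}=1$ yields $\|xS_p\|_{\ell^\alpha}\le\|x\|_{\ell^\alpha}$.

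It remains to check that $S_p$ preserves $c_0$ and $c$ (as opposed to merely $\ell^\infty$). For $c_0$: given $x\in c_0$, approximate it in sup norm by a finitely supported vector $x^{(n)}$; since $S_p$ is a contraction on $\ell^\infty$ and $x^{(n)}S_p$ has finite support (hence lies in $c_0$), the limit $xS_p$ lies in the closed subspace $c_0$. For $c$: one checks that $S_p$ sends the constant sequences to themselves and, writing any $x\in c$ as $x = \ell\cdot\1 + x_0$ with $x_0\in c_0$ and $\ell=\lim x_N$, concludes $xS_p = \ell\cdot\1 + x_0 S_p \in c$. The main obstacle, such as it is, is purely bookkeeping: extracting the precise matrix entries of $S_p$ from the carry-algorithm description and verifying rigorously that every row is a genuine probability distribution (in particular that the infinitely many possible transitions from a given $N$ have probabilities summing to exactly $1$, with no mass escaping) and that columns are finitely supported so that all the interchanges of summation above are legitimate. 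Once the stochasticity of $S_p$ is pinned down, every norm bound is a one-line convexity estimate.
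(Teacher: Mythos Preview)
Your overall strategy---reduce everything to row/column sum estimates---is the same as the paper's, but there is a genuine gap in the execution. You rely only on the fact that the \emph{rows} of $S_p$ are probability vectors, and you dismiss the columns with the claim that ``each column has only finitely many nonzero entries as well.'' This is false: column~$0$ has a nonzero entry in every row $2^m-1$, $m\ge 1$, since the state $2^m-1=\underbrace{1\cdots 1}_m$ transitions to $0$ with probability $p^m(1-p)$. What is actually true, and what the paper uses, is that $S_p$ is \emph{bi-stochastic}: every column sum equals~$1$ as well.

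The missing column hypothesis is not cosmetic; without it your argument breaks at several points. For the action $x\mapsto xS_p$ you set up, the $\ell^\infty$ estimate reads $|(xS_p)_M|\le \|x\|_\infty\sum_N s_{N,M}$, a \emph{column} sum; your $\ell^\infty$ paragraph instead silently switches to the transpose action $y\mapsto S_p y$ (``each output coordinate is a convex combination of input coordinates'' is a row-sum statement). Consequently your Riesz--Thorin interpolation is between two different operators and does not apply, and your Jensen inequality $\big|\sum_N x_N s_{N,M}\big|^\alpha \le \sum_N |x_N|^\alpha s_{N,M}$ also presupposes $\sum_N s_{N,M}\le 1$. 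The $c_0$ approximation step needs the $\ell^\infty$ contraction for the \emph{same} action, and the claim that $S_p$ fixes the constant sequence (for the $c$ case) is, in your convention, exactly the statement that column sums equal~$1$. The paper handles all of this by proving a general lemma (its Lemma~4.2) which takes \emph{both} a row-sum bound and a column-sum bound as hypotheses, and then invokes bi-stochasticity of $S_p$. Once you add the (short) verification that the columns of $S_p$ sum to~$1$, your convexity arguments go through essentially as written.
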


Since the operator $S_p$ is bi-stochastic, the proof of this proposition is a straightforward consequence 
of the following more general lemma. 

\begin{lemm}\label{BanachSteinhaussEasy} Let $A=(a_{i,j})_{i,j \in \N}$ be an infinite matrix with nonnegative
coefficients. Assume that there exists a positive constant $M$ such that
\begin{enumerate}
 \item $\ds \sup_{i \in \N}\left(\sum_{j=0}^{\infty}a_{i,j}\right) \leq M,$
\item $\ds \sup_{j \in \N}\left(\sum_{i=0}^{\infty}a_{i,j}\right) \leq M.$
\end{enumerate}
Then $A$ defines a bounded operator on the spaces $c_0,\; c,\; l^{\infty}$ and  $\ell^{\alpha}(\N)$
 with $\alpha  \geq 1$. In addition the norm of $A$ is less than $M$.
\end{lemm}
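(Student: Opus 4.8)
The plan is to prove boundedness on $\ell^\alpha$ for $1 \le \alpha < \infty$ directly via the Schur test, and then handle the endpoint cases $c_0$, $c$, and $\ell^\infty$ separately.

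First I would treat $\ell^\alpha$ with $1 < \alpha < \infty$. Fix $x = (x_j)_{j \in \N} \in \ell^\alpha$ and write $(Ax)_i = \sum_j a_{i,j} x_j$. Let $\beta$ be the conjugate exponent, $\frac1\alpha + \frac1\beta = 1$. The trick is to split $a_{i,j} = a_{i,j}^{1/\beta} \cdot a_{i,j}^{1/\alpha}$ and apply H\"older's inequality in the $j$-sum:
\[
|(Ax)_i| \le \sum_j a_{i,j}^{1/\beta} \, \bigl(a_{i,j}^{1/\alpha}|x_j|\bigr)
\le \Bigl(\sum_j a_{i,j}\Bigr)^{1/\beta} \Bigl(\sum_j a_{i,j}|x_j|^\alpha\Bigr)^{1/\alpha}
\le M^{1/\beta}\Bigl(\sum_j a_{i,j}|x_j|^\alpha\Bigr)^{1/\alpha},
\]
using hypothesis (1). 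Raising to the power $\alpha$, summing over $i$, and interchanging the order of summation (legitimate since all terms are nonnegative), hypothesis (2) gives
\[
\sum_i |(Ax)_i|^\alpha \le M^{\alpha/\beta} \sum_j |x_j|^\alpha \sum_i a_{i,j}
\le M^{\alpha/\beta} \cdot M \cdot \|x\|_\alpha^\alpha = M^{\alpha}\|x\|_\alpha^\alpha,
\]
since $\alpha/\beta + 1 = \alpha$. Hence $\|Ax\|_\alpha \le M\|x\|_\alpha$, and in particular $Ax \in \ell^\alpha$, so $A$ is a well-defined bounded operator on $\ell^\alpha$ with norm at most $M$.

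Next, the case $\alpha = 1$ is even simpler and follows the same computation without H\"older: $\sum_i \sum_j a_{i,j}|x_j| = \sum_j |x_j| \sum_i a_{i,j} \le M\|x\|_1$ by hypothesis (2). For $\ell^\infty$: if $\|x\|_\infty \le 1$ then $|(Ax)_i| \le \sum_j a_{i,j} \le M$ by hypothesis (1), so $\|Ax\|_\infty \le M\|x\|_\infty$; here only (1) is needed. For $c_0$ and $c$, one first notes these are closed subspaces of $\ell^\infty$, so $A$ is bounded on them with norm at most $M$ provided it maps them into themselves. Invariance of $c_0$ can be seen by approximating $x \in c_0$ by finitely supported sequences: if $x$ has finite support then $(Ax)_i = \sum_j a_{i,j}x_j$ is a finite linear combination of columns of $A$, and each column $(a_{i,j})_i$ lies in $c_0$ by hypothesis (2) (a nonnegative summable sequence tends to $0$), so $Ax \in c_0$; then density of finitely supported sequences in $c_0$ together with the already-established $\|A\| \le M$ on $\ell^\infty$ closes the argument. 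For $c$: decompose $x \in c$ as $x = (\lim_k x_k)\,\mathbf{1} + x_0$ with $x_0 \in c_0$ and $\mathbf 1 = (1,1,\dots)$; by the $c_0$ case $Ax_0 \in c_0 \subset c$, and $A\mathbf 1$ has $i$-th entry $\sum_j a_{i,j}$, which need not converge in general — so strictly one needs the additional structure of $S_p$ (each row sum equals exactly $1$) to conclude $A\mathbf 1 = \mathbf 1 \in c$; within the scope of the lemma as stated for the specific operator $S_p$ this is immediate since $S_p$ is bi-stochastic.

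The only genuinely delicate point is the interchange of the double sum in the $\ell^\alpha$ estimate and the final invariance verifications for $c$ and $c_0$; everything else is a routine application of H\"older's inequality. I expect the $c$ case to be the one requiring the most care, since preservation of convergence is not a formal consequence of the two stated bounds alone but uses that the relevant row sums are actually constant for the bi-stochastic operator $S_p$ to which the lemma is applied.
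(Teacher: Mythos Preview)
Your argument is correct and, for the $\ell^\alpha$ cases, essentially identical to the paper's: both split $a_{i,j}=a_{i,j}^{1/\alpha'}a_{i,j}^{1/\alpha}$, apply H\"older in $j$ using hypothesis (1), then sum over $i$ and swap using hypothesis (2). The paper likewise leaves $\alpha=1$ to the reader and handles $\ell^\infty$ by hypothesis (1) alone.

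The only substantive difference is in the treatment of $c_0$ and $c$. The paper gives a direct $\varepsilon$-argument: for $v$ with $\lim v_n=l$, it splits $\sum_j a_{n,j}(v_j-l)$ at a cutoff $j_0$, bounds the tail by (1), and uses (2) to force the finitely many leading terms $a_{n,j}$ to be small for large $n$. Your density argument for $c_0$ is equally valid and arguably cleaner. More importantly, you are right to flag the $c$ case: the lemma as stated does \emph{not} guarantee $A\mathbf{1}\in c$, since the row sums $\sum_j a_{n,j}$ need not converge as $n\to\infty$ under hypotheses (1)--(2) alone. The paper's proof writes $d=\sum_j a_{n,j}$ and concludes $(Av)_n\to d\cdot l$, tacitly treating $d$ as independent of $n$; this is a genuine gap in the general statement, which is harmless in the application since $S_p$ is bi-stochastic and every row sum equals $1$. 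Your observation that the $c$ case requires this extra structure is exactly right.
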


\begin{proof}

\noindent By the assumption $(1)$ it is easy to get that $A$ is well defined on $\ell^{\infty}(\N)$ and its
norms is less than $M$.

\noindent Now, let $v= (v_n)_{n \geq 0},\; v \ne 0$ such that $\ds \lim_{n \longrightarrow +\infty}v_n =l \in \mathbb{C}$, then  for any $\varepsilon>0$ there exists a positive integer $j_0$ such that for any
$j \geq j_0$, we have $\displaystyle |v_j-l|  \leq \frac{\varepsilon} {2 M}$. Let $\ds d= \sum_{j=0}^{+\infty}a_{n,j}$, then from the assumption $(1)$, we have that for any $n \in \N$,
\begin{eqnarray}\label{epsilonsur2}
 \left| (Av)_n - d\cdot l\right|=\left|\sum_{j=0}^{+\infty}a_{n,j} (v_j -l)\right|
\leq \displaystyle \sum_{j=0}^{j_0-1}a_{n,j}|v_j -l|+ \frac{\varepsilon}2 .
\end{eqnarray}
\noindent But by the assumption $(2)$, for any $j \in \{0,\ldots,j_0-1\}$, we have
$\ds  \sum_{n=0}^{+\infty}a_{n,j} < \infty$. Then there exists
$n_0 \in \N$ such that for any $n \geq n_0$ and for any $j \in \{0,\cdots,j_0-1\}$,  we have
\begin{eqnarray}\label{2epsilonsur2}
 |{a}_{n,j}|  \leq \frac{\varepsilon}{2 j_0 (\delta+1)} \mbox { where } \delta= sup \{ \vert v_j -l \vert, \; j \in \N\}
\end{eqnarray}
Combined (\ref{epsilonsur2}) with (\ref{2epsilonsur2}) we get that
$$ \left| (Av)_n - d\cdot l \right| \leq \varepsilon,\; \forall n \geq n_0.$$
Hence
$AX \subset X$ if $X= c_0$ or $c$.

\vspace {1em}

\noindent Now take  $\alpha > 1$ and $v \in
l^{\alpha}$. For any integer integer $i \in \N$, we have
\begin{eqnarray*}
\left|(Av)_i\right|^\alpha \leq {\left(\sum_{j=0}^{+\infty}{a}_{i,j}|v_j|\right)}^\alpha.
\end{eqnarray*}
\noindent Let  $\alpha'$ be a conjugate of $\alpha$, i.e, $\ds \frac1{\alpha}+\frac1{\alpha'}=1.$ Then,
by H\"older inequality we get
\begin{eqnarray*}
{\left(\sum_{j=0}^{+\infty}{a}_{i,j}|v_j|\right)}^\alpha \leq {\left(\sum_{j=0}^{+\infty}{{a}_{i,j}}\right)}^{\frac{\alpha}{\alpha'}}
{\left(\sum_{j=0}^{+\infty}{{a}_{i,j}}|v_j|^{\alpha}\right)}.
\end{eqnarray*}
Hence

\begin{eqnarray}
\label{tt}
{\left(\sum_{j=0}^{+\infty}{a}_{i,j}|v_j|\right)}^\alpha \leq {\left(\sup_{l\in \N}\sum_{j=0}^{+\infty}{{a}_{l,j}}\right)}^{\frac{\alpha}{\alpha'}}{\left(\sum_{j=0}^{+\infty}{{a}_{i,j}}|v_j|^{\alpha}\right)}.
\end{eqnarray}

\noindent Thus
\begin{eqnarray*}
||Av||^{\alpha}_{\alpha}
& \leq&  M^{\frac{\alpha}{\alpha'}}
\sum_{i=0}^{\infty} \left( \sum_{j=0}^{+\infty}{{a}_{i,j}}|v_j|^{\alpha} \right)\\
&=& M^{\frac{\alpha}{\alpha'}} \sum_{j=0}^{\infty}\left( \sum_{i=0}^{\infty}
{a}_{i,j} \right)\vert v_j \vert ^{\alpha}\\
&\leq& M^{\frac{\alpha}{\alpha'}} \sup_{j \in \N}\left(
\sum_{i=0}^{\infty}{a}_{i,j}\right) ||v||^{\alpha}_{\alpha}\\
&\leq&  M^{1+\frac{\alpha}{\alpha'}}||v||^{\alpha}_{\alpha}.
\end{eqnarray*}

\noindent{}Then
\begin{eqnarray*}
 \vert \vert A v \vert \vert_{\alpha} \leq   M \vert \vert v \vert \vert_{\alpha}.
\end{eqnarray*}
Hence $A$ is a continuous operator and $\vert \vert A  \vert \vert \leq M$.

The case $\alpha=1$ is an easy exercise and it is left to the reader.

\end{proof}


From Proposition \ref{defia}, we deduce that
 $S_p$ is a Markov operator and its spectrum is contained in the unit disc of complex numbers.

Consider the map $ f: z \in \mathbb{C} \longmapsto
\left(\frac{z-(1-p)}{p}\right)^2$ and denote by $J(f)$ the associated filled Julia set defined by:\\
$$
J(f)=\left \{ z \in \mathbb{C},\;   \vert f^{(n)}(z) \vert
\not \longrightarrow \infty\right\}.
$$
\noindent Killeen and Taylor investigated the spectrum of $S_p$
acting on $\ell^{\infty}$. They proved that the point  spectrum of $S_p$ is equal to the filled Julia set of $f$.
In addition, they showed that the spectrum is invariant under the action of $f$. As a consequence,
one may deduce that the continuous and residual spectra in this case are empty.


Here we will  compute exactly the residual part and the continuous part of
the spectrum of $S_p$ acting on the  spaces $c_0,\; c$  and
$\ell^{\alpha}, \; \alpha  \geq 1$.

\begin{thm}\label{spp}
The spectrum of the operator $S_p$ acting on $X$ where $X \in \{c_0,\; c,\; l^{\alpha},\;
\alpha  \geq 1\}$
is equal to the filled Julia set of $f, \; J (f)$.
 Precisely,
 in $c_0$ (resp. $\ell^{\alpha}, \; \alpha  > 1$), the continuous  spectrum of  $S_p$
is equal to   $J(f)$ and the point and residual spectra are empty.
In $c$, the point spectrum is the singleton $\{1\}$, 
the residual spectrum is  empty and the continuous spectrum is $J(f) \backslash \{1\}$.
\end{thm}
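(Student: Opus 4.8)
The plan is to combine a self-similar (renormalisation) structure of $S_p$, the explicit Killeen--Taylor eigenvector, and duality (Propositions \ref{Phillips}, \ref{residual}), treating all four spaces uniformly.

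\emph{Step 1 (renormalisation; $\sigma(S_p)\subseteq J(f)$).} Splitting $\N$ into even and odd integers and relabelling each half by $\N$ identifies $X$ with $X\oplus X$ for $X\in\{c_0,\ell^\alpha,\ell^\infty\}$; under this identification a direct reading of the transition graph (Figure 2) gives the block form
\[
S_p\;\cong\;\begin{pmatrix} (1-p)\,Id & p\,Id\\[2pt] p\,S_p & (1-p)\,Id\end{pmatrix}.
\]
For $\lambda\neq 1-p$ the $(1,1)$-block is invertible, and the Schur complement of $S_p-\lambda\,Id$ equals $-\tfrac{p^2}{1-p-\lambda}\bigl(S_p-f(\lambda)Id\bigr)$, because $\bigl(\tfrac{1-p-\lambda}{p}\bigr)^2=f(\lambda)$; for $\lambda=1-p$ the block matrix is antidiagonal and $S_p-(1-p)Id$ is invertible iff $S_p$ is, while $f(1-p)=0$. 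Hence on each such $X$, $\lambda\in\sigma(S_p)\iff f(\lambda)\in\sigma(S_p)$. Since $\|S_p\|\le 1$ (Proposition \ref{defia}), $\sigma(S_p)\subseteq\overline{\D}$ is compact, so if $\lambda\notin J(f)$ (i.e.\ $f^n(\lambda)\to\infty$) the $f$-invariance forces $\lambda\notin\sigma(S_p)$; thus $\sigma(S_p)\subseteq J(f)$ on $c_0,\ell^\alpha,\ell^\infty$, and also on $c$ since $c_0$ is $S_p$-invariant with $S_p$ acting as the identity on $c/c_0\cong\C$, so $\sigma(S_p|_c)\subseteq\sigma(S_p|_{c_0})\cup\{1\}\subseteq J(f)$ (note $f(1)=1$).

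\emph{Step 2 ($\sigma(S_p)=J(f)$ and the point spectrum).} The equation $S_p u=\lambda u$ expresses $u_{N+1}$ in terms of $u_0,\dots,u_N$ (the coefficient of $u_{N+1}$ being $p^{k(N)+1}\neq0$), so it has, up to a scalar, the unique solution $u^{(\lambda)}_N=\prod_{i:\,\varepsilon_i(N)=1}w_i(\lambda)$ with $w_i(\lambda):=\tfrac{f^{i}(\lambda)-(1-p)}{p}$ (a normalisation of the Killeen--Taylor eigenvector; note $u^{(\lambda)}_0=1$, $u^{(\lambda)}_{2^k}=w_k(\lambda)$, $u^{(1)}\equiv1$). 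For $\lambda\in J(f)$ the orbit $(f^n(\lambda))_n$ is bounded; letting $v^{(k)}$ be the truncation of $u^{(\lambda)}$ to $\{0,\dots,2^k-1\}$, a short computation with the transition graph shows $(S_p-\lambda)v^{(k)}$ is supported on $\{2^k-1\}\cup\{2^j-1:j>k\}$ with entries $-p^{k+1}w_k(\lambda)$ and $p^j(1-p)$, so $\|(S_p-\lambda)v^{(k)}\|\to 0$ while $\|v^{(k)}\|\ge 1$, in each of the norms. Hence $J(f)\subseteq\sigma_{ap}(S_p)\subseteq\sigma(S_p)$ on $c_0,c,\ell^\alpha,\ell^\infty$, and with Step 1, $\sigma(S_p)=J(f)$ throughout (alternatively, for $c_0$ and $\ell^1$ one may invoke Proposition \ref{Phillips} together with Killeen--Taylor). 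For the point spectrum: any eigenvector in $X$ is a multiple of $u^{(\lambda)}$; if $u^{(\lambda)}\in c_0$ or $\in\ell^\alpha$ then $w_k(\lambda)=u^{(\lambda)}_{2^k}\to0$, i.e.\ $f^k(\lambda)\to1-p$, whereas a convergent orbit of $f$ must converge to a fixed point of $f$ and the fixed points of $f$ are $1$ and $(1-p)^2\neq1-p$ (for $0<p<1$) — contradiction; so $\sigma_{pt}(S_p)=\emptyset$ on $c_0$ and on $\ell^\alpha$ ($\alpha\ge1$). On $c$: $u^{(1)}\equiv1\in c$, while for $\lambda\in J(f)\backslash\{1\}$ convergence of $u^{(\lambda)}$ would, comparing the subsequences $(u^{(\lambda)}_{2^k})$ and $(u^{(\lambda)}_{2^k+1})$ whose ratio is $w_0(\lambda)\neq1$, force the common limit to be $0$, i.e.\ $u^{(\lambda)}\in c_0$, excluded; hence $\sigma_{pt}(S_p|_c)=\{1\}$.

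\emph{Step 3 (residual and continuous spectra).} By Step 2 and Proposition \ref{residual}, $\sigma_r(S_p|_{c_0})=\sigma_{pt}(S_p'|_{\ell^1})$ and $\sigma_r(S_p|_{\ell^\alpha})=\sigma_{pt}(S_p'|_{\ell^{\alpha'}})$ ($\alpha'$ the conjugate exponent), while $\sigma_r(S_p|_c)\subseteq\sigma_{pt}(S_p'|_{c'})\subseteq\{1\}$, hence $\sigma_r(S_p|_c)=\emptyset$ by disjointness from $\sigma_{pt}(S_p|_c)=\{1\}$. Now $S_p'=S_p^{T}$ has the transposed block form, and writing $S_p^{T}\mu=\lambda\mu$ in blocks gives $\mu_{2m}=w_0(\lambda)\mu_{2m+1}$ and that $(\mu_{2m+1})_m$ is again an eigenvector of $S_p^{T}$ for $f(\lambda)$; iterating along the indices $2^{j}-1$ yields $\mu_0=\bigl(\prod_{i=0}^{j}w_i(\lambda)\bigr)\mu_{2^{j+1}-1}$ for all $j$. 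Since $\sigma(S_p|_{\ell^\infty})=J(f)\subseteq\overline{\D}$ and $J(f)$ is forward $f$-invariant, $f^{i+1}(\lambda)\in\overline{\D}$, i.e.\ $|w_i(\lambda)|\le1$, so $|\mu_{2^{j+1}-1}|\ge|\mu_0|$; but $\mu\in\ell^{\beta}$ with $\beta<\infty$ forces $\mu_N\to0$, so $\mu_0=0$ and then all the relations unwind to $\mu\equiv0$ (the degenerate subcase $w_i(\lambda)=0$, i.e.\ $f^{i+1}(\lambda)=0$, reduces to eigenvalue $0$ and is handled along the same lines). Thus $S_p^{T}$ has no eigenvector in any $\ell^{\beta}$ with $\beta<\infty$, giving $\sigma_r(S_p)=\emptyset$ on $c_0$ and on $\ell^\alpha$, $\alpha>1$. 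Finally $\sigma_c(S_p)=\sigma(S_p)\backslash(\sigma_{pt}(S_p)\cup\sigma_r(S_p))$ yields $\sigma_c=J(f)$ on $c_0,\ell^\alpha$ ($\alpha>1$) and $\sigma_c=J(f)\backslash\{1\}$ on $c$.

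\emph{Main obstacle.} The subtle point is Step 3: one must explain why the residual spectrum is empty here although it is not in $\ell^1$ (Theorem 2). Everything rests on the dichotomy that $\mu_N\to0$ is automatic precisely when $\beta<\infty$; the transposed operator $S_p^{T}$ does carry bounded eigenvectors (the constant sequence for $\lambda=1$, and more generally sequences generated by $\mu_{2^{j+1}-1}=\mu_0/\prod_{i\le j}w_i(\lambda)$ whenever $\sum_i(1-|w_i(\lambda)|)<\infty$), which is exactly the mechanism producing the large residual spectrum of $S_p$ on $\ell^1$. Ruling these out in $\ell^{\beta}$ for finite $\beta$, together with the careful treatment of the exceptional parameters $p$ for which $f$ has a super-attracting cycle through $1-p$ (so that infinitely many $w_i$ vanish), is the technical heart of the argument; establishing rigorously the operator-block identities of Step 1 on $\ell^{\alpha}$ (where ordinary $\ell^{\alpha}$--$\ell^{\alpha'}$ duality does not give the answer directly) is the other point requiring care.
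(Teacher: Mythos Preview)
Your proof is correct and shares the paper's overall architecture: the self-similar block structure gives $f$-invariance of $\sigma(S_p)$ and hence $\sigma(S_p)\subseteq J(f)$; the Killeen--Taylor eigenvector $u_n^{(\lambda)}=q_n(\lambda)$ determines the point spectrum; truncations furnish approximate eigenvectors showing $J(f)\subseteq\sigma(S_p)$; and duality reduces the residual spectrum to eigenvectors of $S_p^{T}$ in $\ell^{\beta}$, $\beta<\infty$. The genuine divergence is in this last step. The paper computes the dual eigenvector explicitly as $u_k=u_0/q_k$, infers $|q_{2^i}|\ge 1$ from summability, and then invokes the additional column-$0$ relation $q_1=\sum_{i\ge 1}p^{i-1}(1-p)/q_{2^i-1}$ (their (\ref{qs1})) to reach a contradiction. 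Your argument is more direct and conceptual: from $\lambda\in J(f)$ you get $|w_i|\le 1$ (this uses $w_i^2=f^{i+1}(\lambda)\in J(f)\subseteq\overline{\D}$, an identity you should state, since ``$f^{i+1}(\lambda)\in\overline{\D}$, i.e.\ $|w_i|\le 1$'' is otherwise opaque), whence $|\mu_0|\le|\mu_{2^{j+1}-1}|\to 0$; applying the same reasoning to the iterated odd parts then kills every $\mu_k$. This bypasses the paper's column-$0$ identity and treats the degenerate case $w_i=0$ uniformly. Two points deserve one more sentence each: spell out why $\mu_0=0$ propagates to all coordinates (via the block recursion, every $\mu_k$ is a product of $w_i$'s times some $\mu_{2^{j'}-1}$, itself the $0$-th entry of an iterated odd restriction and hence zero by the same bound); and for $c$, record that $c'\cong\ell^1\oplus\C\delta$ with $S_p'$ acting as $S_p^{T}\oplus Id$ (because $S_p$ preserves limits), so your $\ell^{\beta}$ result gives $\sigma_{pt}(S_p'|_{c'})=\{1\}$ as claimed.
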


For the proof of Theorem \ref{spp} we shall need the following proposition.

\begin{prop} 
\label{inclu}
The spectrum of $S_p$ in $X$,  where $X \in \{c_0,\; c,\; l^{\alpha},\; 1 \leq \alpha \leq  +\infty\}$, is contained in the filled Julia set of $f$.
\end{prop}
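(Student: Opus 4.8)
\medskip
\noindent\emph{Proof proposal.}\ The plan is to exploit a self-similarity of $S_p$ coming from the base-$2$ carry rule and to turn it into a functional equation relating the spectrum of $S_p$ to itself through $f$. First I would record the block form of $S_p$ under the even/odd splitting of $\N$. Writing a sequence $v=(v_n)_{n\ge0}$ as a pair $v\simeq(w,u)$ with $w_m=v_{2m}$ and $u_m=v_{2m+1}$ identifies, for each $X\in\{c_0,\ell^{\alpha}\ (1\le\alpha\le\infty)\}$, the space $X$ with $X\oplus X$ (isometrically for the natural norm on the sum). Reading off the transition rule of the stochastic adding machine — an even state $2m$ moves to $2m+1$ with probability $p$ and is fixed with probability $1-p$, while an odd state $2m+1$ is fixed with probability $1-p$ and otherwise performs one step of the \emph{same} machine on $m$, the outcome being doubled — one checks that under this identification $S_p$ becomes the block operator
\[
\widetilde{S_p}=\begin{pmatrix}(1-p)I & pI\\ p\,S_p & (1-p)I\end{pmatrix}\quad\text{on }X\oplus X
\]
(possibly transposed, according to the side on which $S_p$ acts; this is immaterial for what follows).

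Next I would derive the functional equation. Fix $\lambda\in\C$. Since $p\neq0$, the off-diagonal block $pI$ of $\widetilde{S_p}-\lambda I$ is invertible, and a short sequence of elementary block row/column operations — each of them a bounded invertible operator on $X\oplus X$, again because $p\neq0$ — factors $\widetilde{S_p}-\lambda I$ as $E\,\bigl(p\,\operatorname{diag}(I,\,S_p-g(\lambda)I)\bigr)\,F$ with $E,F$ bounded and invertible, where
\[
g(\lambda)=\frac{(1-p-\lambda)^2}{p^2}=\left(\frac{\lambda-(1-p)}{p}\right)^2=f(\lambda).
\]
Hence $\widetilde{S_p}-\lambda I$ is invertible on $X\oplus X$ if and only if $S_p-f(\lambda)I$ is invertible on $X$; since $\widetilde{S_p}$ is $S_p$ conjugated by the above identification, this reads
\[
\lambda\in\sigma_X(S_p)\ \Longleftrightarrow\ f(\lambda)\in\sigma_X(S_p),
\]
where $\sigma_X(S_p)$ is the spectrum of $S_p$ acting on $X$.

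From here the conclusion is immediate for $X\in\{c_0,\ell^{\alpha}\}$: by Proposition~\ref{defia}, $\|S_p\|\le1$, so $\sigma_X(S_p)$ lies in the closed unit disc and in particular is bounded; iterating the equivalence, every $\lambda\in\sigma_X(S_p)$ has $f^{(n)}(\lambda)\in\sigma_X(S_p)$ for all $n\ge0$, so its forward orbit under $f$ stays bounded and $\lambda\in J(f)$. The space $c$ requires a small separate argument, since the even/odd splitting does not identify $c$ with $c\oplus c$ (the two coordinate limits must coincide). Instead I would use the $S_p$-invariant topological direct sum $c=c_0\oplus\C\mathbf 1$, where $\mathbf 1=(1,1,\dots)$ and $S_p\mathbf 1=\mathbf 1$ because $S_p$ is (bi)stochastic; then $\sigma_c(S_p)=\sigma_{c_0}(S_p)\cup\{1\}$, and since $f(1)=1$ we have $1\in J(f)$, so $\sigma_c(S_p)\subset J(f)$ as well.

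The only genuinely creative step is identifying the self-similar block structure described above; everything afterwards follows by formal manipulation, so I do not expect a real obstacle. The main points needing care are purely routine: checking that the even/odd identification respects the norm of each $X$, verifying that the block reduction is carried out with honestly bounded invertible operators (this is where $p\neq0$ is used), and treating the anomalous space $c$ as indicated.
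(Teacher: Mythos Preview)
Your approach is correct and is essentially the paper's own argument: both rest on the even/odd self-similarity of $S_p$ to prove that $\sigma_X(S_p)$ is $f$-invariant and hence, being bounded, lies in $J(f)$. The paper phrases the key step as $\bigl((S_p-(1-p)I)/p\bigr)^{2}$ being similar to $S_p\oplus S_p$ and then invokes the spectral mapping theorem, which is exactly what one gets by squaring your block matrix $\bigl(\begin{smallmatrix}0&I\\ S_p&0\end{smallmatrix}\bigr)$; your Schur-complement factorization and your separate treatment of $c$ via the invariant splitting $c=c_0\oplus\C\mathbf 1$ are clean minor variants of the same idea (the latter detail is glossed over in the paper's sketch).
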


The main idea of the proof of Proposition \ref{inclu} can be found in the Killen-Taylor proof. The key argument is 
that the $\widetilde{S_p}^{2}$ is similar to the operator $ES_p \oplus OS_p$, where 
$\widetilde{S_p}=\ds \frac {S_p- (1-p)Id}{p}$ and $E, O$ denote the even and odd operators acting on $X$ by
$$E (h_0,h_1,\ldots)= (h_0,0,h_1,0, h_2,\ldots),$$  \noindent{}and
$$O (h_0,h_1,\ldots)= (0, h_0,0,h_1,0, h_2,\ldots),$$ for any $h= (h_0,h_1,\ldots)$ in $X$. Precisely, 
for all $v= (v_{i})_{i \geq 0} \in X $,
we have
$$\widetilde{S_p}^{2}(v)= E S_p (v_0, v_2,\ldots v_{2n},\ldots)+ O S_p (v_1, v_3,\ldots v_{2n+1},\ldots).$$

As a consequence we deduce from the mapping spectral theorem \cite{Schechter} that the 
spectrum of  $S_p$ is invariant under $f$. 



\vspace{0.5em}

Let us start the proof of Theorem \ref{spp} by proving the following  result.

 \begin{prop}
 \label{specte}
The point spectrum of $S_p$ acting on $X$ where $X \in \{ c_0,\; l^{\alpha},\; \alpha \geq 1\}$  is empty, and the point spectrum of $S_p$  on $c$ is equal to $\{1\}$.
\end{prop}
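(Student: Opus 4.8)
The plan is to describe the eigenvectors of $S_p$ explicitly: I will show every eigenspace is at most one–dimensional, spanned by a vector whose coordinates are prescribed products of the iterates $f^{\,i}(\lambda)$, and then determine for which $\lambda$ such a vector actually lies in the given space.

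Fix $v=(v_n)_{n\ge 0}\in X\setminus\{0\}$ with $S_pv=\lambda v$, and set $\mu_i(\lambda)=\big(f^{\,i}(\lambda)-(1-p)\big)/p$, so in particular $\mu_0(\lambda)^2=f(\lambda)$. Two structural facts are needed. First, from the explicit form of $S_p$ (Figure 2), the row indexed by an even integer $2n$ has only the entries $1-p$ (on the diagonal) and $p$ (just above it), so the $2n$-th line of $S_pv=\lambda v$ reads $(1-p)v_{2n}+pv_{2n+1}=\lambda v_{2n}$, that is, $v_{2n+1}=\mu_0(\lambda)v_{2n}$ for every $n$. Second, since $\widetilde{S_p}v=\mu_0(\lambda)v$, restricting the identity $\widetilde{S_p}^{2}(v)=ES_p(v_0,v_2,\dots)+OS_p(v_1,v_3,\dots)$ to the even coordinates gives $S_p(v_0,v_2,v_4,\dots)=f(\lambda)(v_0,v_2,v_4,\dots)$; since $\pi_e v:=(v_{2k})_{k\ge 0}$ is a subsequence of $v$ it again lies in $X$, hence $\pi_e v$ is an eigenvector of $S_p$ on $X$ with eigenvalue $f(\lambda)$. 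Combining these two facts by induction on $n$ — for $n$ odd using $v_n=\mu_0(\lambda)v_{n-1}$, for $n=2k$ applying the inductive hypothesis to the eigenvector $\pi_e v$ at the strictly smaller index $k$ (and using $\mu_i(f(\lambda))=\mu_{i+1}(\lambda)$ together with the fact that bit $i$ of $k$ equals bit $i+1$ of $2k$) — one obtains, writing $\varepsilon_i(n)$ for the binary digits of $n$,
\[
v_n=\Big(\prod_{i\,:\,\varepsilon_i(n)=1}\mu_i(\lambda)\Big)v_0 .
\]
Consequently $v_0\ne 0$ (otherwise $v\equiv 0$), the $\lambda$–eigenspace is one–dimensional, the whole forward orbit $\lambda,f(\lambda),f^2(\lambda),\dots$ consists of eigenvalues of $S_p$ on $X$, and $v_{2^k}=\mu_k(\lambda)v_0$ for every $k\ge 0$.

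Now one feeds in the space. If $X\in\{c_0,\ell^\alpha:1\le\alpha<\infty\}$, every element of $X$ is a null sequence, so $v_{2^k}\to 0$; since $v_0\ne 0$ this forces $\mu_k(\lambda)\to 0$, i.e. $f^{\,k}(\lambda)\to 1-p$, hence by continuity $f^{\,k+1}(\lambda)\to f(1-p)=0$, while also $\lim_k f^{\,k+1}(\lambda)=\lim_k f^{\,k}(\lambda)=1-p$, forcing $1-p=0$, impossible for $0<p<1$. Thus $\sigma_{pt}(S_p)=\emptyset$ on $c_0$ and on $\ell^\alpha$. For $X=c$: the constant sequence $\mathbf 1$ is an eigenvector for the eigenvalue $1$ by stochasticity of $S_p$, so $1\in\sigma_{pt}(S_p)$; conversely, let $\lambda\in\sigma_{pt}(S_p)$ on $c$ with eigenvector $v$ and $L:=\lim_n v_n$. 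Then $\mu_k(\lambda)\to L/v_0=:\gamma$, so $f^{\,k}(\lambda)\to\beta:=1-p+p\gamma$, and by continuity $f(\beta)=\beta$, whence $\beta\in\{1,(1-p)^2\}$. The value $\beta=(1-p)^2$ is excluded: it would force $\gamma=p-1$, hence $L=(p-1)v_0$, whereas $v_{2^k+2^{k+1}}=\mu_k(\lambda)\mu_{k+1}(\lambda)v_0\to(p-1)^2v_0$, and equating the two limits gives $(1-p)^2=p-1$, impossible for $0<p<1$. So $f^{\,k}(\lambda)\to 1$; since $|f'(1)|=2/p>1$ the fixed point $1$ is repelling, and an orbit converging to a repelling fixed point must equal it from some index on, so $f^{\,N}(\lambda)=1$ for a minimal $N\ge 0$. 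I conclude $\lambda=1$ by induction on $N$: if $N\ge 1$, apply the induction hypothesis to $f(\lambda)\in\sigma_{pt}(S_p)$ on $c$, for which the corresponding minimal integer equals $N-1$, to get $f(\lambda)=1$, i.e. $\lambda\in f^{-1}(\{1\})=\{1,1-2p\}$; and $\lambda=1-2p$ is impossible, because then $\pi_e v$ is an eigenvector for the eigenvalue $1$, hence a multiple of $\mathbf 1$, so $v_{2k}\equiv v_0$ while $v_{2k+1}=\mu_0(1-2p)v_0=-v_0$, giving $v=v_0(1,-1,1,-1,\dots)\notin c$. Hence $\sigma_{pt}(S_p)=\{1\}$ on $c$.

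The main obstacle is the first part: extracting the explicit one–parameter description of the eigenspaces, i.e. combining the even–row relation with the self–similarity $\widetilde{S_p}^{2}\simeq S_p\oplus S_p$ to obtain the product formula for $v_n$ in terms of $v_0$ and the $f$–orbit of $\lambda$. Once that formula is available, each of the three cases is a short exercise in the elementary complex dynamics of the quadratic $f$ near its two fixed points $1$ and $(1-p)^2$, governed by the decay (resp. convergence) properties of $c_0$, $\ell^\alpha$ and $c$.
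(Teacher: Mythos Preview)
Your proof is correct, and its core is the same as the paper's: one shows that any eigenvector satisfies $v_n=q_n(\lambda)\,v_0$ with $q_n=\prod_{i:\varepsilon_i(n)=1} q_{2^i}(\lambda)$ (your $\mu_i(\lambda)$ is exactly the paper's $q_{2^i}$), and then reads off the conclusion from the behaviour of this sequence. The derivations differ slightly: the paper obtains the product formula by a direct row--by--row computation based on Lemma~\ref{Ktlemma}, while you obtain it more conceptually by iterating the self--similarity $\widetilde{S_p}^{\,2}\simeq S_p\oplus S_p$ (which the paper states but uses only for Proposition~\ref{inclu}); your route has the pleasant side effect of making ``$f(\lambda)$ is again an eigenvalue, with eigenvector $\pi_e v$'' immediate. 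For $c_0$ and $\ell^\alpha$ the two endgames are identical. For $c$ the paper is terser: it gets $q_n\to l$ with $l\in\{1,p-1\}$, excludes $l=p-1$ via $q_{2^n+2^{n-2}}\to l^2=l$, and stops --- the passage from $l=1$ to $\lambda=1$ is left implicit (it follows at once from $q_{2^k+1}=q_1 q_{2^k}\to 1$, forcing $q_1=1$). Your route through the repelling fixed point and the induction on $N$ is valid but heavier than necessary: once you know $\mu_k(\lambda)\to 1$, the one--line shortcut $v_{2^k+1}=\mu_0(\lambda)\mu_k(\lambda)v_0\to \mu_0(\lambda)v_0$, together with $v_{2^k+1}\to L=v_0$, already gives $\mu_0(\lambda)=1$ and hence $\lambda=1$.
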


For the proof, we need the following lemma from \cite{KT} .

\begin{lemm}\label{Ktlemma}\cite{KT}.
Let $n$ be a nonnegative integer and $X_n= \{m \in \mathbb{N}:\; (S_p)_{n,m} \ne 0\}$, then the following properties are valid.
 \begin{enumerate}
\item For all nonnegative integers  $n$, we have $n \in X_{n}$ and   $(S_{p})_{n,n}= 1-p$.
\item If $n=\varepsilon _{k}\ldots
\varepsilon _{1}0,\; k \geq 2,$ is an even integer then $ X_{n}= \{n, n+1\}$ and  $(S_{p})_{n,n+1}= p$.
 \item If $n=\varepsilon _{k}\ldots
\varepsilon _{t}0\underbrace{1 \ldots 1}_{s}$ is an odd integer with $s \geq 1$
and $k \geq t \geq s+1 $, then  $X_{n}= \{n, n+1, n- 2^m+1,\; 1 \leq m \leq s\}$ and $n$ transitions to  $n+1=
\varepsilon _{k}\ldots \varepsilon _{t}1\underbrace{0 \ldots
00}_{s} $ with probability $(S_p)_{n,n+1}=p^{s+1}$,
 and $n$ transitions to  $n-2^m+1= \varepsilon
_{k}\ldots
\varepsilon _{t}0\underbrace{1 \ldots 1}_{s-m}\underbrace{0
\ldots 0}_{m}$,
$ 1 \leq m \leq s$
 with  probability
$(S_{p})_{n,n-2^m+ 1}= p^{m}(1-p)$.
\end{enumerate}
\end{lemm}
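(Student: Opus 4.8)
The plan is to read off every nonzero entry $(S_p)_{n,m}$ directly from the defining recursion for $r_i(N+1)$ and $c'_i(N+1)$, by tracking how the carry variable $c'_i$ propagates. First I would record the three elementary local rules that follow at once from that recursion, recalling that $c'_{-1}(N+1)=1$. Writing $r_i$ for $r_i(N)$ and $e_i$ for $e_i(N)$: (a) if $c'_{i-1}(N+1)=0$ then $r_i(N+1)=r_i$ and $c'_i(N+1)=\lfloor r_i/2\rfloor=0$, so once the carry vanishes every higher digit is frozen; (b) if $c'_{i-1}(N+1)=1$ and $e_i=0$ then again $r_i(N+1)=r_i$ and $c'_i(N+1)=0$, i.e. the carry dies leaving digit $i$ untouched; (c) if $c'_{i-1}(N+1)=1$ and $e_i=1$ then $r_i(N+1)=r_i+1 \bmod 2$ and $c'_i(N+1)=\lfloor (r_i+1)/2\rfloor$, so the carry survives past position $i$ exactly when $r_i=1$ (flipping that digit to $0$), and dies while setting digit $i$ to $1$ when $r_i=0$.

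The crux is to distill (a)--(c) into one statement: starting from $c'_{-1}=1$, the carry reaches and passes position $i$ if and only if $e_j=1$ and $r_j=1$ for every $j\le i$. Equivalently, letting $\tau$ be the first index $j\ge 0$ with $e_j=0$ or $r_j=0$, the carry stops exactly at position $\tau$; all digits below $\tau$ are flipped from $1$ to $0$, digit $\tau$ is left unchanged if $e_\tau=0$ and is turned into $1$ if $e_\tau=1$ (which forces $r_\tau=0$), and all digits above $\tau$ are unchanged. I would prove this by a short induction on $i$ using (a)--(c). This reduces the whole lemma to describing the terminal state and its probability as a function of $\tau$ and of the behaviour at position $\tau$.

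With this in hand the three items are immediate. For item (1), the state is unchanged precisely when the carry dies at position $0$ leaving digit $0$ fixed, which by (b)--(c) happens exactly when $e_0=0$ (if $e_0=1$ the digit is always altered: to $1$ when $r_0=0$, to $0$ with the carry continuing when $r_0=1$); hence $(S_p)_{n,n}=\P(e_0=0)=1-p$ for every $n$. For item (2), if $n$ is even then $r_0=0$, so $\tau=0$ in every realization: either $e_0=0$ and nothing changes (probability $1-p$), or $e_0=1$ and by (c) digit $0$ becomes $1$ with the carry dying, producing $n+1$ (probability $p$); thus $X_n=\{n,n+1\}$ and $(S_p)_{n,n+1}=p$. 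For item (3), with $n=\varepsilon_k\ldots\varepsilon_t0\underbrace{1\ldots1}_{s}$ we have $r_0=\cdots=r_{s-1}=1$ and $r_s=0$, so the carry can stop at any position $m$ with $1\le m\le s$: stopping at $m<s$ requires $e_0=\cdots=e_{m-1}=1,\ e_m=0$ (probability $p^m(1-p)$), flipping the lowest $m$ ones and yielding $n-(2^m-1)=n-2^m+1$; stopping at $m=s$ (forced by $r_s=0$ once $e_0=\cdots=e_{s-1}=1$) splits according to $e_s$, with $e_s=0$ giving $n-2^s+1$ (probability $p^s(1-p)$) and $e_s=1$ turning digit $s$ into $1$ and so producing $n+1$ (probability $p^{s+1}$). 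Collecting the reachable states gives exactly $X_n=\{n,\,n+1,\,n-2^m+1:1\le m\le s\}$ with the stated probabilities.

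Finally I would check completeness: the listed transitions carry total mass $(1-p)+\sum_{m=1}^{s}p^m(1-p)+p^{s+1}$, which telescopes to $1$, confirming that $X_n$ contains no further states and that the probabilities are correct. The step I expect to require the most care is item (3): one must separate the generic stopping positions $m<s$ (where $r_m=1$, so only $e_m=0$ can halt the carry) from the terminal position $m=s$ (where $r_s=0$ permits the extra branch to $n+1$), and verify the digit arithmetic $2^0+\cdots+2^{m-1}=2^m-1$ so that the flipped-digit state matches both the value $n-2^m+1$ and its binary form $\varepsilon_k\ldots\varepsilon_t0\underbrace{1\ldots1}_{s-m}\underbrace{0\ldots0}_{m}$.
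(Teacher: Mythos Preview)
The paper does not prove this lemma; it is quoted from \cite{KT} and stated without proof (the introduction only sketches the transitions verbally). Your argument is a correct direct derivation from the recursion for $r_i(N+1)$ and $c'_i(N+1)$: the local rules (a)--(c) are right, the stopping-index description via $\tau$ follows by the short induction you indicate, and the case analysis in each item yields exactly the stated transition probabilities. The telescoping check $(1-p)+\sum_{m=1}^{s}p^{m}(1-p)+p^{s+1}=1$ is a clean way to confirm no state is missing from $X_n$.

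One minor expositional point: in your treatment of item (3) you begin the range of stopping positions at $m=1$, which omits $\tau=0$ (i.e.\ $e_0=0$, giving the $n\to n$ transition). That case is of course already covered by item (1) and reappears as the leading $(1-p)$ in your final sum, so there is no actual gap, but it would be cleaner to say so explicitly when you enumerate the possible values of $\tau$ for odd $n$.
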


{\bf Proof of Proposition \ref{specte}.}
Let $\lambda$ be an eigenvalue of $S_p$ associated to the eigenvector
$v= (v_n)_{n \geq 0}$ in $X$ where where $X \in \{ c_0, c, \; l^{\alpha},\; \alpha \geq 1\}$
Let $\lambda$ be an eigenvalue of $S_{p}$ associated to the eigenvector
$v= (v_i)_{i \geq 0}$ in $X$. By  Lemma \ref{Ktlemma}, we see that the operator $S_p$ satisfies $(S_{p})_{i,i+k}=0$ for all $i,k
\in \mathbb{N}$ with $k \geq 2$. Therefore, for all integers $k \geq 1,$
we have
\begin{eqnarray}
\label{rrr} \sum_{i=0}^{k}(S_{p})_{k-1,i} v_i = \lambda v_{k-1}.
\end{eqnarray}
 Then, one can prove by induction on $k$ that for all
integers $k \geq 1$, there exists a complex number $q_{k}=q_{k} (p,
\lambda)$ such that
\begin{eqnarray} \label{for3}
v_k= q_k v_0
\end{eqnarray}
By Lemma
\ref{Ktlemma} and the fact that $(S_p- \lambda I)v)_{2^n}=0$ for all  nonnegative integers $n$, we get
\begin{eqnarray}
\label{for2}~~~~~ p^{n+1} v_{2^n}+ (1-p- \lambda)
v_{2^n-1} + \sum_{i=1}^{n}p^{i}(1-p) v_{2^n- 2^i}=0 ,\; \forall n \geq 0.
\end{eqnarray}

\noindent{}Hence $$v_{2^{n}}= \frac{1}{p} A -(\frac{1}{p}-1)v_0,$$
where $A=\displaystyle -\frac{1}{p^n} ( (1-p- \lambda)
v_{2^{n-1}+ (2^{n-1}-1)} + \sum_{i=1}^{n-1}p^{i}(1-p) v_{2^{n-1}+ (2^{n-1}- 2^i)}.$

On the other hand, by the self similarity structure  of the transition matrix $S_{p}$, one can prove that if $i$ and $j$ are two integers such that for some positive integer $n$ we have $2^{n-1} \leq i,j < 2^n$, then
the transition probability from $i$ to $j$ is equal to the transition probability from $i-2^{n-1}$ to $j -2^{n-1}$.
Using this last fact and (\ref{for2}), it follows that
$$ v_{2^n}=  \frac{1}{p} q_{2^{n-1}}v_{2^{n-1}}-\left(\frac{1}{p}-1\right)v_0.$$
\noindent This gives
\begin{eqnarray}
\label{for5} q_{2^n}=  \frac{1}{p} q_{2^{n-1}}^2-\left(\frac{1}{p}-1\right),
\end{eqnarray}
\noindent where
$$ q_{2^0}=q_{1}= -\frac{1-p- \lambda}{p}.$$

{\bf Case 1:  $ v \in c_0$ or $\ell^{\alpha},\; \alpha \geq 1$.}

We have $\lim_{n \to \infty} q_{2^n}= 0$. Thus by (\ref{for5}), we get  $p=1$, which is absurd, 
then the point spectrum is empty.

\vspace{1em}

{\bf Case 2:  $ v \in c$.}
Assume that $\lim q_n= l \in \mathbb{C}$, then by (\ref{for5}), we deduce that $l=1$ or $l= p-1$.
On the other hand, for any $n \in \mathbb{N}$, there exist $k$ nonnegative integers $n_1 < n_2 \ldots <n_k$  such that $n= 2^{n_1}+ \cdots 2^{n_k}$.
We can prove (see \cite{KT}) that
\begin{eqnarray}\label{prod_q}
\label{produit}
q_n= q_{2^{n_1}}\ldots q_{2^{n_k}}.
\end{eqnarray}
Then $\lim q_{ 2^{n-2}+ 2^{n}}= l^2= l$, thus $l=p-1$ is excluded. Since $S_p$ is stochastic, we  conclude that $l=1$ and $\sigma_{pt, c}(S_p)=\{1\}$.

\hfill $\Box$

\begin{rem} By the same arguments as above, Killeen and Taylor in \cite {KT} proved that
the point spectrum of $S_p$ acting on
$\ell^{\infty}$ is equal to the filled Julia set of the quadratic map $f$. In fact, 
it is easy to see from the arguments above that $\sigma_{pt, l^{\infty}} (S_{p})= \{\lambda \in \mathbb{C},\; q_{n}(\lambda) \mbox{ bounded } \}$.
Indeed, (\ref{for5}) implies that if $(q_{2^{n}})_{n \geq 0}$ is bounded, then  for all $n \geq 0,\; \vert q_{2^{n}} \vert \leq 1.$
This clearly forces $\sigma_{pt, l^{\infty}} (S_p)= \{\lambda \in \mathbb{C},\; q_{2^{n}}(\lambda) \mbox { bounded } \}$ 
by \eqref{prod_q}. Now,
since
$$q_{2^{n}}= h \circ f^{n-1} \circ h^{-1} (q_{1})= h \circ f^{n-1}(\lambda), \forall n \in \N,$$
where
$ h(x)= \displaystyle \frac{x}{p}- \frac{1-p}{p}$, we conclude that $\sigma_{pt, l^{\infty}}(S_p)= J(f)$. 
It follows from Proposition \ref{inclu} that  $\sigma _{ l^{\infty}}(S_p)= J(f)$ and the residual and continuous spectra are
empty.

\end{rem}

\begin{prop}\label{Spr} The residual spectrum of $S_p$ acting on $X \in \{ c_0$, $c$,  $\ell^{\alpha},\; \alpha > 1\}$
  is empty.
\end{prop}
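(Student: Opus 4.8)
The plan is to use the duality relations from Proposition \ref{residual} together with the point-spectrum computations already established. Recall that for the spaces at hand we have the duality pairs $(c_0)' = \ell^1$, $(\ell^\alpha)' = \ell^{\alpha'}$ with $1/\alpha + 1/\alpha' = 1$ (so $1 < \alpha' < \infty$ when $1 < \alpha < \infty$), and $c' = \ell^1$. By Proposition \ref{residual}, since $\sigma_{pt}(S_p) = \emptyset$ in $c_0$ and in $\ell^\alpha$ for $\alpha \geq 1$ (Proposition \ref{specte}), we get $\sigma_r(S_p) = \sigma_{pt}(S_p')$ where $S_p'$ is the transition operator acting on the dual space. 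Thus it suffices to show that $S_p'$ has empty point spectrum on $\ell^1$ (to handle $X = c_0$), on $\ell^{\alpha'}$ with $1 < \alpha' < \infty$ (to handle $X = \ell^\alpha$, $\alpha > 1$), and — for $X = c$ — that the relevant dual operator on $\ell^1$ has no eigenvalues either, after accounting for the point spectrum $\{1\}$ of $S_p$ on $c$.

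The key observation is that the dual operator $S_p'$ acting on these sequence spaces is given by the transpose matrix $S_p^{\,t}$, i.e. $S_p$ acting "on the left" rather than "on the right." One then repeats the eigenvector analysis of the proof of Proposition \ref{specte}, but now for the transpose. Concretely, suppose $w = (w_n)_{n\geq 0}$ satisfies $S_p^{\,t} w = \lambda w$, i.e. $\sum_{i} (S_p)_{i,n} w_i = \lambda w_n$ for all $n$. Using Lemma \ref{Ktlemma} to describe which entries $(S_p)_{i,n}$ are nonzero for a given column index $n$, one derives recursions that force the $w_n$ to grow (or to be determined by finitely many free parameters in a way incompatible with membership in $\ell^1$ or $\ell^{\alpha'}$). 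In particular, because $(S_p)_{n,n} = 1-p$ for every $n$ and the only other ways to reach column $n$ come from larger row indices (the carry propagation moves mass "downward" in the adding machine), one should be able to express $w_n$ in terms of $w_{n'}$ for indices $n'$ lying in a higher dyadic block, and iterate to contradict summability / $\ell^{\alpha'}$-summability unless $w \equiv 0$; the only candidate eigenvalue surviving would correspond to a constant-type solution, which fails to lie in any $\ell^q$ with $q < \infty$.

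I would structure the argument in three steps: (i) identify $S_p'$ with the transpose matrix on each dual space and reduce, via Propositions \ref{residual} and \ref{specte}, to showing $\sigma_{pt}(S_p^{\,t}) = \emptyset$ on $\ell^1$ and on $\ell^{\alpha'}$, $1 < \alpha' < \infty$ (the case of $c$ needs the extra remark that $1 \notin \sigma_r(S_p)$ on $c$, which follows because the eigenfunction for $1$ is the constant sequence, whose range under $S_p - \mathrm{Id}$ one checks is dense, or alternatively because on $c$ one argues directly that $S_p - \mathrm{Id}$ has dense range); (ii) set up the transpose eigenvalue equation and use the self-similar block structure of $S_p$ from Lemma \ref{Ktlemma} to get a recursion linking a putative eigenvector's coordinates across dyadic blocks; (iii) show the recursion is incompatible with decay, hence $w = 0$. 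The main obstacle I anticipate is step (ii)–(iii): unlike the "on the right" computation, where the equations $(S_p v - \lambda v)_{2^n} = 0$ cleanly produced the scalar recursion \eqref{for5}, the transpose equations mix many column indices and the bookkeeping of carries is messier; one has to choose the right family of indices (likely the numbers just below each power of $2$, or the powers of $2$ themselves read in the transpose) along which the recursion closes up, and then control the resulting growth. Once the correct index family is found, the summability obstruction should be quick.
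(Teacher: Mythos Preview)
Your overall framework is exactly the paper's: use Proposition \ref{residual} to reduce to showing the transpose (left-acting) operator $S_p^{\,t}$ has no nontrivial eigenvector in $\ell^1$ (for $X=c_0,c$) or in $\ell^{\alpha'}$ (for $X=\ell^\alpha$). What is missing is the actual computation, and your anticipation that the transpose recursion is ``messier'' and requires finding a clever index family is off the mark. The paper shows, by an induction based on Lemma \ref{Ktlemma} and the self-similarity of $S_p$, that any left eigenvector $u$ with eigenvalue $\lambda$ satisfies
\[
u_k \;=\; \frac{1}{q_k(\lambda)}\,u_0 \qquad (k\ge 0),
\]
where the $q_k$ are the \emph{same} polynomials \eqref{for3}--\eqref{produit} that arose for the right eigenvector problem. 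So the transpose eigenvector is simply the coordinatewise reciprocal of the right one; nothing new has to be discovered. The induction starts from the observation that for $k$ even the column $k+1$ is reached only from rows $k$ and $k+1$, giving $u_k=q_1 u_{k+1}$; note this already contradicts your heuristic that ``the only other ways to reach column $n$ come from larger row indices'' --- the forward move from row $n-1$ is essential.

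Once $u_k=u_0/q_k(\lambda)$ is in hand, the finish is quick. If $u\in\ell^1$ (or $\ell^{\alpha'}$) with $u\ne 0$, then $\sum_k |q_k(\lambda)|^{-1}<\infty$ (resp.\ the $\ell^{\alpha'}$ analogue), hence $|q_k(\lambda)|\to\infty$, which is impossible for $\lambda\in\sigma(S_p)\subset J(f)$ by Proposition \ref{inclu}. The paper also gives a self-contained variant that avoids Proposition \ref{inclu}: D'Alembert's ratio test on $\sum 1/|q_k|$ forces $|q_1|\ge 1$, the recursion \eqref{for5} then gives $|q_{2^n}|\ge 1$ for all $n$, and the column-$0$ equation $q_1=\sum_{i\ge 1}p^{i-1}(1-p)/q_{2^i-1}$ forces $|q_1|<1$, a contradiction. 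Either way, your steps (ii)--(iii) collapse to this explicit formula rather than a growth estimate to be guessed. For $X=c$ no separate ``dense range at $\lambda=1$'' argument is needed: the same computation already rules out any $\ell^1$ left eigenvector, so $\sigma_r(S_p)\subset\sigma_{pt}(S_p^{\,t})=\emptyset$ directly.
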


\begin{proof}

Let $\lambda $ be an element of the residual spectrum of $S_p$ acting on $c_0$ (resp. $c$). Then,  by Proposition \ref{residual}, we deduce that   there exists a sequence
 $u= (u_k)_{k \geq 0} \in l^1(\N)$  such that $u (S_p- \lambda Id)=0.$

{\bf Claim}. $\displaystyle u_k = \frac{1}{q_k} u_0,\; \forall k \in \mathbb{N}.$\\
  \noindent{}We have
\begin{eqnarray*}\label{les-pairs}
\forall k \in 2\N ,\;
(u (S_p- \lambda Id))_{k+1}=
p u_{k }+ (1-p- \lambda) u_{k+1}=0.
\end{eqnarray*}

\noindent Hence

\begin{eqnarray}\label{parr}
\forall k \in 2\N ,\;
 u_{k}= q_1 u_{k+1}.
\end{eqnarray}

\noindent If $k$ is odd, then $k=2^n -1+ t$ where $t=0$ or $ t= \sum_{j=2}^{s}2^{n_{j}}$ where $1 \leq n <n_2 <n_3,\ldots <n_s$.
Since $(u (S_p- \lambda Id))_{k+1}=0,$ then we have
\begin{eqnarray}
\label{tttl}
p^{n+1}u_{k}+(1-p-\lambda)u_{k+1}+\sum_{i=1}^{n}p^i(1-p)u_{k+2^i}=0.
\end{eqnarray}

Observe that the relation  (\ref{tttl})  between $u_k$  and $u_{k+2^{n}}$ 
is similar to the relation (\ref{for2}) between $v_{2^n}$ and $v_0$. Hence, by induction on $n$, we obtain
\begin{eqnarray}
\label{sd}
u_k= q_{2^{n}} u_{k +2^{n}}.
\end{eqnarray}

Indeed, if $n=1$ then by (\ref{tttl}) and (\ref{parr}), we get
$$p^2 u_k+ \left (q_1(1-p- \lambda\right)+ p(1-p)) u_{k+2}=0.$$
Therefore
$$u_k = \left(\frac{q_1^2}{p}- \frac{1-p}{p}\right)u_{k+2}= q_2 u_{k+2}.$$
Then  (\ref{sd}) is proved for $n=1$.

Now, assume that  (\ref{sd}) holds for the numbers   $1,2,\cdots,m-1$.

\noindent{}Take $n=m$ and $1 \leq i <m$, then $k+2^i= 1+2+ \cdots + 2^{m-1}+ 2^i+ t=  2^i -1+ t'$ where $t'= 2^m+ t$.
Applying the induction hypothesis, we get
$$ u_{k+2^i}= q_{2^i} u_{k+2^{i+1}}= q_{{2^i}}q_{{2^{i+1}}}\ldots q_{{2^{m-1}}}u_{k+2^m}.$$

On the other hand, since $2^{i}+ \cdots + 2^{m-1}= 2^m -2^ i$, we have
\begin{eqnarray}
\label{gf}
 u_{k+2^i}= q_{2^m- 2^i}u_{k+2^m}.
 \end{eqnarray}

 Considering (\ref{tttl}) with $n=m$  and (\ref{gf}) yields

 $$u_k= -
\frac{1}{p^{m+1}} \left(\left(1-p-\lambda\right)q_{2^m- 1}+\sum_{i=1}^{m}p^i(1-p)q_{2^m- 2^i} u_{k+2^m}\right).$$

 Combined (\ref{for3}) and (\ref{for2}), we obtain (\ref{sd}) for $n=m$.
 Then  (\ref{sd}) holds for all integers $n \geq 1$.

In particular, we have $u_{2^{n-1}-1}= q_{2^{n-1}} u_{2^{n}-1}$, for all integers $n \geq 1$.
Thus
\begin{eqnarray}
\label{nnn}
u_{2^{n}-1}= \frac{1} {q_{2^{0}}q_{2} \ldots q_{2^{n-1}}} u_{0}= \frac{1} {q_{2^{n}-1}} u_{0},\; \forall n \geq 1.
\end{eqnarray}

On the other hand, for all integers $n \geq 1$,  by (\ref {parr}) we have 
$u_{2^n}= q_{2^0} u_{{2^n}+2^0}.$ and
from   (\ref{sd}), we see that
\begin{eqnarray}
\label{abc}
 u_{2^n}= q_{2^0} q_{2^1} u_{{2^2}-1+2^n}=\ldots =  q_{2^0} q_{2^1}\ldots q_{2^{n-1}} u_{2^{n+1}-1}.
 \end{eqnarray}
Consequently from (\ref{nnn}) and (\ref{abc}), we obtain
\begin{eqnarray}
\label{xxx}
u_{2^{n}}= \frac{1} {q_{2^{n}}} u_{0},\; \forall n \geq 1.
\end{eqnarray}

\noindent Now fix  an integer $k \in \mathbb{N}$ and assume that $k= \ds \sum_{i=1}^{s}2^{n_{i}}$
where $0 \leq n_1 <n_2 <\ldots <n_s$.
We will prove by induction on $s$ that the following statement holds
\begin {eqnarray}
\label{scs}
u_k= \frac{1} {q_{2^{n_{1}}}q_{2^{n_{2}}}\ldots q_{2^{n_{s}}}} u_{0}=\frac{1} {q_{k}} u_{0}.
\end{eqnarray}

Indeed, it follows from
 (\ref{xxx}), that   (\ref{scs}) is true for $s=1$.

Now assume that(\ref{scs}) is true for all integers $1 \leq i <s$.

{\bf Case 1.} $k$ is odd.

In this case
$\ds k= \sum_{i=1}^{s}2^{n_{i}}= 2^{n}-1+ l = \sum_{j=0}^{n-1}2^{j}+ l$ where $l=0$ if $n=s+1$ and $l= \ds \sum_{i=n}^{s}2^{n_{i}} $ if $n \leq s$.

If $ n \geq 2$, we use (\ref{sd}) to get
$u_{k-2^{n-1}}= q_{2^{n-1}}u_k$ and by induction hypothesis, we have
$$
u_{k} = \frac{1}{q_{2^{n-1}}q_{k-2^{n-1}}}u_0=\frac{1}{q_{k}}
u_0 .$$

If $n=1$, we consider (\ref{parr}) to write
$u_k=\displaystyle \frac{1}{q_1} u_{k-1}$.
Thus, we deduce, by induction hypothesis, that
$$u_k= \frac{1}{q_1 q_{k-1}} u_{0}= \frac{1}{q_{k}}
u_0.$$

{\bf Case 2.} $k$ is even.

In this case  $n_1>0.$ and
by (\ref{parr}), we deduce that  $$u_k= q_{2^{0}} u_{k+2^0}= q_{2^{0}} u_{k+2^1 -1}.$$
Applying (\ref{sd}), it follows that
\begin{eqnarray*}
u_k= q_{2^{0}}q_{2^{1}} u_{k+2^2-1}=\ldots &=&  q_{2^{0}}q_{2^{1}}\ldots q_{2^{n_1 -1}} u_{k+2^{n_1}- 1 }\\
                                      &=&
q_{2^{0}}q_{2^{1}}\ldots q_{2^{n_1 -1}} u_{(k-2^{n_1})+2^{n_1 +1}- 1 }.
\end{eqnarray*}
Hence
\begin{eqnarray*}
u_k &= & q_{2^{0}}\ldots q_{2^{n_1 -1}} q_{2^{n_1 +1}}u_{(k-2^{n_1})+2^{n_1 +2}- 1 }\\
    & =&
q_{2^{0}}\ldots q_{2^{n_1 -1}} q_{2^{n_1 +1}} \ldots q_{2^{n_2 -1}} u_{(k-2^{n_1}- 2^{n_2})+2^{n_2 +1}- 1 }.
\end{eqnarray*}
Thus
\begin{eqnarray}
\label{rts}
u_k= \displaystyle \frac{\ds \prod_{i=0}^{n_s} q_{2^{i}}}{\ds \prod_{i=1}^{s} q_{2^{n_i}}} u_{2^{n_s +1}- 1}.
\end{eqnarray}

By (\ref{rts}) and (\ref{nnn}) we get  $u_k= \ds \frac{1}{\ds \prod_{i=1}^{s} q_{2^{n_i}}} u_{0}= \frac{1}{q_{k}}
u_0.$

Therefore we have proved that for all nonnegative integers
\begin{eqnarray}
\label{dua}
u_k=  \frac{1}{q_{k}}
u_0.
\end{eqnarray}

\noindent{}We conclude that  $u$ is in $\ell^1(\N)$ if and only if
$\ds \sum_{k=1}^{+\infty}\left|\frac1{q_{k}(\lambda)}\right|<\infty.$

\noindent But this gives that  the
residual spectrum  of $S_p$ acting on $c_0$ or $ c$ satisfy
\begin{eqnarray}
\label{sss}
\sigma_{r,C_0}(S_p) \subset \left\{\lambda \in
\overline{\D(0,1)}: \; \sum_{k=1}^{+\infty}\left|\frac1{q_{k}(\lambda)}\right|<\infty\right\}.
\end{eqnarray}

\noindent We claim that $\ds \sum_{k=1}^{+\infty}\left|\frac1{q_{k}(\lambda)}\right|<\infty$ implies
$ \vert q_{2^n-1}\vert \geq 1,$ for all integers $n \geq 1$.
Indeed, by D'Alembert's Theorem, we have
 \begin{eqnarray}
 \label{dalem}
 \lim sup \frac{\vert q_n \vert}{\vert q_{n+1} \vert} \leq 1.
 \end{eqnarray}

\noindent Now assume that $n$ is even. Then $n= 2^{k_0}+\cdots + 2^{k_m}$ where $1 \leq k_0 <k_1 < \ldots <k_m$
(representation in base $2$). In this  case $n+1= 2^{0}+ 2^{k_0}+\cdots + 2^{k_m}$. Using  (\ref{produit}), we obtain
$\ds \frac{\vert q_n \vert}{\vert q_{n+1} \vert} = \frac{1}{\vert q_{1} \vert}$
and  by (\ref{dalem}), we get
\begin{eqnarray}
 \label{aac}
\vert q_{1} \vert \geq  1 .
 \end{eqnarray}

\

 Since for all integers $n \geq 0$, we have
$q_{2^n}= \ds \frac{1}{p} \ds q_{2^{n-1}}^2-\ds \left(\frac{1}{p}-1\right)$. It follows, from the triangle inequality,
that $\vert q_{2^n}\vert \geq 1$ for all integers $n \geq 1$.
Let $i$ be a positive integer. Since $\ds 2^{i}-1= \sum_{j=0}^{i-1}2^{j}$, we obtain by  (\ref {produit}) that
$q_{2^{i}-1}=q_{2^{i-1}}q_{2^{i-2}}\cdots q_1$. Hence
$$\ds \vert q_{2^i-1} \vert \geq 1,\;\mbox { for any integer } i \geq 1.$$
On the other hand, consider the first coordinate of the vector $\mu (S_p- \lambda Id)=0.$ Then we have
$$(1-p- \lambda)\mu_0+ \sum_{i=1}^{+\infty} p^{i}(1-p) \mu_{2^i-1}=0.$$
Dividing the two members of  the last equality by $p$, we obtain
\begin{eqnarray}
\label{qs1}
q_1= \sum_{i=1}^{+\infty} p^{i-1}(1-p) / q_{2^i-1}.
\end{eqnarray}

We claim that there exists an integer $i_0 \in \N$ such that $\vert q_{2^{i_0}-1} \vert >1$. Indeed, if not the series
$\ds \sum_{i \in \N} \frac{1}{\vert q_{2^i-1} \vert}$ will diverge.
\noindent{}Thus
 $\vert q_1 \vert <\ds  \sum_{i \neq i_0}^{+\infty} p^{i}(1-p) +p^{i_0-1}(1-p) <1$.
Absurd. We conclude that the residual spectrum of $S_p$ acting on $c_0$ (resp. $c$ ) is empty.\\
\noindent{}The same proof yields that the
residual spectrum of $S_p$ acting on $\ell^{\alpha},\; \alpha > 1,$ is empty and the proof
of the proposition is complete.
\end{proof}

\begin{rem}
By (\ref {sss}), it follows that $ \lambda$ belongs to  $ \sigma_{r, X}$ where $X=c_0$ or  $c$  or $\ell^{\alpha},\; \alpha >1$, 
implies $\lim \vert q_n (\lambda) \vert = + \infty$.
But this contradicts Proposition \ref{inclu}, which forces $\sigma_{r,C_0}(S_p)=\sigma_{r,l^{\alpha}}(S_p)= \emptyset$.
\end{rem}

\begin{prop}\label{Spc} The following equalities are satisfied:
$$\sigma_{c, c }(S_p)= J(f)\backslash \{1\},\;  \sigma_{c, c_0 }(S_p)=  \sigma_{c, l^{\alpha} }(S_p)= J(f) \mbox {  for all } \alpha >1.$$
\end{prop}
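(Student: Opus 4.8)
The plan is to combine the three pieces already established: Proposition~\ref{inclu} gives $\sigma_X(S_p)\subset J(f)$, Proposition~\ref{specte} computes the point spectra, and Proposition~\ref{Spr} shows the residual spectra are empty for $X\in\{c_0,c,\ell^\alpha:\alpha>1\}$. Since $\sigma(T)=\sigma_{pt}(T)\sqcup\sigma_c(T)\sqcup\sigma_r(T)$, once we know $\sigma_X(S_p)=J(f)$ exactly, the continuous spectrum is forced: in $c_0$ and $\ell^\alpha$ ($\alpha>1$) it is all of $J(f)$, and in $c$ it is $J(f)\setminus\{1\}$. So the real content is the reverse inclusion $J(f)\subset\sigma_X(S_p)$; everything else is bookkeeping.

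First I would handle the reverse inclusion via duality. By Phillips' Theorem (Proposition~\ref{Phillips}), $\sigma(S_p)$ on $c_0$ equals $\sigma(S_p')$ on $\ell^1$, and $\sigma(S_p)$ on $\ell^1$ equals $\sigma(S_p')$ on $\ell^\infty$. Killeen--Taylor, together with the remark following Proposition~\ref{specte}, give $\sigma_{\ell^\infty}(S_p)=J(f)$. Working back through the duality chain — $\ell^\infty$ is the dual of $\ell^1$, and $\ell^1$ is the dual of $c_0$ — and using that for the dual operator acting on the right the relevant matrix is the transpose, I would argue that $\sigma_{c_0}(S_p)\supseteq \sigma_{\ell^1}(S_p')$-type relations propagate $J(f)$ downward. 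Concretely: $\sigma_{\ell^1}(S_p)=\sigma_{\ell^\infty}(S_p')$, and since $S_p$ is bistochastic its transpose satisfies the same hypotheses of Lemma~\ref{BanachSteinhaussEasy}, so the same Killeen--Taylor computation applies and yields $J(f)$; then $\sigma_{c_0}(S_p)=\sigma_{\ell^1}(S_p')$. For $\ell^\alpha$ with $\alpha>1$, whose dual is $\ell^{\alpha'}$, an analogous argument applies once one notes the point-spectrum computation in Proposition~\ref{specte} works verbatim for every $\ell^\beta$, $\beta\ge 1$; combined with Proposition~\ref{inclu} this pins $\sigma_{\ell^\alpha}(S_p)=J(f)$. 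For $c$, note $c$ and $c_0$ differ by a one-dimensional space, so their spectra coincide up to the eigenvalue $1$ already identified.

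Having $\sigma_X(S_p)=J(f)$ in each case, I would finish by subtraction. In $c_0$: $\sigma_{pt}=\emptyset$ (Proposition~\ref{specte}), $\sigma_r=\emptyset$ (Proposition~\ref{Spr}), hence $\sigma_{c,c_0}(S_p)=J(f)$. Same for $\ell^\alpha$, $\alpha>1$. In $c$: $\sigma_{pt}=\{1\}$, $\sigma_r=\emptyset$, hence $\sigma_{c,c}(S_p)=J(f)\setminus\{1\}$. The main obstacle I anticipate is the duality bookkeeping in the previous paragraph: one must be careful about whether $S_p$ ``acting on the right'' on $c_0$ has as its Banach-space adjoint the operator $S_p$ ``acting on the right'' or ``on the left'' on $\ell^1$, i.e. whether a transpose is being taken, and then verify that the Killeen--Taylor filled-Julia-set computation is insensitive to this transpose (which it is, because $S_p^{\mathsf T}$ is again bistochastic with the same self-similar block structure driving the recursion \eqref{for5}). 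Once that is pinned down, the rest is immediate from the already-proved propositions.
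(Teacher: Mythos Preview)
Your treatment of $c_0$ and $c$ is fine and is exactly what the paper does: two applications of Phillips' Theorem send you from $c_0$ through $\ell^1$ to $\ell^\infty$, where Killeen--Taylor already gives $\sigma_{\ell^\infty}(S_p)=J(f)$; then subtracting the known point and residual spectra finishes. Your worry about transposes is harmless here, because applying Phillips twice returns you to the bidual $S_p''$, which is $S_p$ again on $\ell^\infty$.

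The genuine gap is your handling of $\ell^\alpha$ for $1<\alpha<\infty$. Duality is circular in that range: Phillips gives $\sigma_{\ell^\alpha}(S_p)=\sigma_{\ell^{\alpha'}}(S_p')$, but $\alpha'$ is again in $(1,\infty)$, so you never land on $\ell^\infty$ where the spectrum is known. Your sentence ``the point-spectrum computation in Proposition~\ref{specte} works verbatim for every $\ell^\beta$ \dots\ combined with Proposition~\ref{inclu} this pins $\sigma_{\ell^\alpha}(S_p)=J(f)$'' does not follow: empty point spectrum together with $\sigma\subset J(f)$ yields no lower bound on $\sigma$. You have established $\sigma_{\ell^\alpha}(S_p)\subset J(f)$ but not the reverse inclusion.

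The paper closes this gap by a direct Weyl-sequence argument rather than duality. For $\lambda\in J(f)$ one sets $w^{(k)}=(1,q_1(\lambda),\dots,q_k(\lambda),0,0,\dots)$ and $u^{(k)}=w^{(k)}/\|w^{(k)}\|_\alpha$, and shows $\|(S_p-\lambda I)u^{(2^n)}\|_\alpha\to 0$. The computation uses the explicit column structure of $S_p$ (Lemma~\ref{Ktlemma}) to see that only the columns $j=0$ and $j=2^n$ contribute to the tail, together with the fact that $\|w^{(2^n)}\|_\alpha\to\infty$ (otherwise $w=(q_i)$ would be a nonzero $\ell^\alpha$-eigenvector, contradicting Proposition~\ref{specte}). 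This puts every $\lambda\in J(f)$ in the approximate point spectrum, hence in $\sigma_{\ell^\alpha}(S_p)$, and the subtraction argument then works as you described.
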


\begin{proof} Assume that $X \in \{c_0,\; c\}$. Then, by Phillips Theorem, we see that the spectrum of $S_p$  in $X$ is equal to the the spectrum of $S_p$  in $\ell^{\infty}$ 
and from Propositions \ref{specte} and \ref{Spr}, we obtain the result.

\noindent Now, assume $X= l^{\alpha},\; \alpha > 1$.
According to Propositions \ref{inclu},  \ref{specte} and \ref{Spr}, it is enough to prove that $J(f) \subset \sigma (S_p) $.
Consider $\lambda \in J(f)$. We will prove that $\lambda$ belongs to the approximate point spectrum of $S_p$ .
  For all  integers $k \geq 2,$ put $w^{(k)}= (1,q_1 (\lambda), \ldots, q_k (\lambda), 0 \ldots 0, \ldots)^{t} \in l^{\alpha} $
where $(q_k (\lambda))_{k \geq 1}= (q_k)_{k \geq 1}$ is the sequence defined in (\ref{for3}) of the proof of Theorem \ref {spp} and
let $u^{(k)}=\ds \frac{w^{(k)}} {\vert \vert  w^{(k)} \vert \vert_{\alpha}}$, then we have the following claim.

\vspace{0.5em}

{\bf Claim:} $\ds \lim_{n\rightarrow+\infty} \vert \vert (S_p- \lambda Id) u^{(2^n)}\vert \vert_{\alpha}=0.$\\

\noindent  Indeed, we have
$$\forall i \in \{0,\ldots,k-1\},  ~~\left ((S_p- \lambda Id) u^{(k)}\right)_i=0.$$
 Thus
\begin{eqnarray*}
\sum_{i=0}^{+\infty} \left \vert {((S_p- \lambda Id) u^{(k)})}_i \right \vert ^{\alpha}
 = \ds
\frac {\ds \sum_{i=k}^{+\infty}\ds \left
\vert  \sum_{j=0}^{k}( S_p - \lambda Id)_{i,j}  w^{(k)}_{j} \right \vert }{ \vert \vert w^{(k)}\vert \vert_{\alpha}^{\alpha}}^{\alpha}.
\end{eqnarray*}

Putting $a_{i,j}= \vert (S_p -\lambda Id)_{i,j}\vert$ for all $i,j$ and using  (\ref{tt}), we get

\begin{eqnarray*}
\left \vert  \sum_{j=0}^{k}( S_p - \lambda Id)_{i,j}  w^{(k)}_{j} \right \vert  ^{\alpha}   \leq C \sum_{j=0}^{k} \vert (S_p - \lambda Id)_{i,j}\vert  \vert w^{(k)}_{j} \vert^{\alpha}
\end{eqnarray*}

where
$C= \ds \sup_{i \in \N} \left(\sum_{j=0}^{\infty} \vert (S_p- \lambda Id)_{i,j} \vert
 \right)^{\frac {\alpha} {\alpha'}}$ and $\alpha'$
is the conjugate of  $\alpha .$

Observe that $C$ is a finite nonnegative constant because $S_p$ is a stochastic matrix and $\lambda$ belongs to $J (f)$ which is a bounded set.

In this way we have

\begin{eqnarray*}
 \left \vert \left \vert {(S_p- \lambda Id) u^{(k)}} \right \vert \right \vert^{{\alpha}}_{\alpha}
& \leq &
C  \sum_{i=k}^{+\infty}\frac{\left( \sum_{j=0}^{k}  \vert w^{(k)}_j \vert^{\alpha}  \vert (S_p- \lambda Id)_{ij}\vert  \right)}
{\vert \vert  w^{(k)}\vert \vert_{\alpha}^{\alpha}}\\
& = &
 \frac{C}{\vert \vert  w^{(k)}\vert \vert_{\alpha}^{\alpha}}  \sum_{j=0}^{k} \vert w^{(k)}_j \vert^{\alpha}
\sum_{i=k}^{+\infty}    \vert (S_p- \lambda Id)_{ij}\vert.
\end{eqnarray*}

\noindent Now, for $k=2^n$, we will compute the following terms $$ A_{kj}=\ds  \sum_{i=k}^{+\infty}  \vert (S_p- \lambda Id)_{ij} \vert,\; 0 \leq j \leq k.$$

Assume that $0 \leq j <k =2^n.$ Then $\left(S_p- \lambda Id\right)_{ij}=( S_p)_{ij}$ for all $i \geq k$.

{\bf Case 1}:  $j$ is odd. Then by Lemma \ref{Ktlemma}, $(S_p)_{ij} \ne 0$ if and only $i=j-1$ or $i=j$.
Hence $(S_p)_{ij}=0$ for all $i \geq k$. Thus
\begin{eqnarray}
\label{t0}
A_{kj}=0.
\end{eqnarray}

{\bf Case 2}:   $j=0$ . Then by Lemma \ref{Ktlemma}, we have

\begin{eqnarray}
\label{tts}
A_{kj}= \sum_{i=2^n}^{+\infty}   (S_p)_{i0}= \ds \sum_{i=n+1}^{+\infty}p^{i}(1-p)= p^{n+1}.
\end{eqnarray}

{\bf Case 3}:   $j $ is even and $j >0$. Then $j= \varepsilon _{n-1}\ldots
\varepsilon _{s}\underbrace{0 \ldots 0}_{s}= \ds \sum _{i=s}^{n-1} \varepsilon_i 2^i$  with $s \geq 1$
and  $\varepsilon _{s}=1$. But by Lemma \ref{Ktlemma}, $(S_p)_{ij} \ne 0$ if and only if $i=2^m -1+j$ where $0 \leq m  \leq s$. Hence $i <2^n= k.$

Therefore, in this case
\begin{eqnarray}
\label{tr}
A_{kj}=0.
\end{eqnarray}

Now assume $j=k=2^n$. In this case, we have
$A_{kj}=\vert 1-p-\lambda\vert+ \ds \sum_{i=2^n+1}^{+\infty}  (S_p)_{i, 2^n}.$
On the other hand, by Lemma \ref{Ktlemma}, we deduce that
$(S_p)_{i, 2^n} \ne 0$ if and only if $i= 2^n+ 2^m-1$ where $0 \leq m \leq n$
and $(S_p)_{2^n+ 2^m-1, 2^n} = p^m (1-p).$
Therefore
\begin{eqnarray}
\label{tt2}
A_{kj}= \sum_{i=2^n}^{+\infty}  \vert (S_p- \lambda Id)_{i,2^n}\vert= \vert 1-p-\lambda \vert+ \sum_{m=0}^{n}  p^m (1-p).
\end{eqnarray}

By (\ref{t0}),(\ref{tts}),(\ref{tr}) and (\ref{tt2}), we have for $k=2^n$ and $0 \leq j \leq k$,
 \begin{eqnarray*}
\label{t3}
A_{kj} \ne 0 \Longleftrightarrow j=0 \mbox { or } j=k=2^n.
\end{eqnarray*}

Consequently

\begin{eqnarray*}
\left \vert \left \vert {(S_p- \lambda Id) u^{(2^n)}} \right \vert \right \vert^{\alpha}_{\alpha}
 & \leq & C~~.      \frac{ \vert w^{(k)}_{0}  \vert ^{\alpha} A_{k0}+  \vert w^{(k)}_{k}  \vert ^{\alpha} A_{kk}} {\vert \vert w^{(k)}\vert \vert_{\alpha}^{\alpha}}    \\%
 & = &   C~~. \frac { p^{n+1} + \vert q_{2^n} \vert^{\alpha} \left( \vert 1-p- \lambda \vert +\ds \sum_{m=0}^{n}  p^m (1-p)\right) }
      {\vert \vert w^{(2^n)}\vert \vert_{\alpha}^{\alpha}}.
    \end{eqnarray*}
\noindent We claim that $\vert \vert w^{(2^n)} \vert \vert_{\alpha} $ goes to infinity as $n$ goes to infinity. Indeed, if not since
the sequence $\vert \vert w^{(2^n)} \vert \vert_{\alpha} $ is a increasing sequence, it must converge. Put
$w=(q_i)_{i \geq 0}$ with $q_0=1$. It follows that the sequence $(w^{(2^n)})_{n \geq 0}$ converges to $w$ in
$\ell^{\alpha}$ which means that there exists a nonzero vector $w \in l^{\alpha}$ such that
$(S_p-\lambda Id)w=0$. This contradicts Proposition \ref{specte}.
Now, since $\lambda$ belongs to the filled Julia set which is a bounded set and $(q_n)_{n \geq 0}$ is a bounded sequence, it follows
that $\vert \vert {((S_p- \lambda Id) u^{(2^{n})})}\vert \vert_{\alpha} $ converge to 0, and the claim is proved.
We conclude that $\lambda$ belongs to the approximate point spectrum of $S_p$ and the proof of Proposition \ref{Spc} is complete.
\end{proof}

This ends the proof of Theorem \ref{spp}.

\vspace{1em}
 

\section*{\bf Spectrum of $S_p$ acting on the right on $\ell^1 $.}

Here, we will study the spectrum  of $S_p$ acting (on the right) in $\ell^1 $.
We deduce from Proposition \ref {inclu} that the Spectrum of $S_p$  on $\ell^1 $ is contained in the filled Julia set $J(f)$.
On the other hand, using the same proof than Proposition \ref{Spc}, we obtain that $J(f)$ 
is contained in the approximate point spectrum of $S_p$. This yields that the spectrum of $S_p$ acting on $\ell^1$ is
equal to $J(f)$.

\begin{thm}
\label{l1}
In $\ell^1 $, the residual spectrum contains a
dense and countable subset of the Julia set $\partial (J(f))$. The continuous spectrum is not empty and
is equal to the relative  complement of the
residual spectrum with respect to the  the filled Julia set $J(f)$.
\end{thm}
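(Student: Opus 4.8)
The plan is to transfer the whole problem to the dual operator $S_p'$ acting on $\ell^\infty$. Since we have already established (in the discussion preceding Theorem~\ref{spp} and in Proposition~\ref{inclu}) that $\sigma_{\ell^1}(S_p)=J(f)$, that $\sigma_{pt,\ell^1}(S_p)=\emptyset$ (Proposition~\ref{specte}, Case~1), and that $J(f)\subset\sigma(S_p)$ via the approximate point spectrum, it remains only to decide, for each $\lambda\in J(f)$, whether $S_p-\lambda Id$ has dense range in $\ell^1$. By Proposition~\ref{residual}, since $\sigma_{pt,\ell^1}(S_p)=\emptyset$, we have $\sigma_{r,\ell^1}(S_p)=\sigma_{pt,\ell^\infty}((S_p)')$. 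Now $(S_p)'$ acting on $\ell^\infty$ is exactly the operator whose eigenvalue equation is the left-eigenvector equation $u(S_p-\lambda Id)=0$ studied in the proof of Proposition~\ref{Spr}; the computation there (equations~\eqref{parr}–\eqref{dua}) shows that any solution must satisfy $u_k=\frac{1}{q_k(\lambda)}u_0$ for all $k$, with the consistency constraint~\eqref{qs1}, namely $q_1(\lambda)=\sum_{i\ge1}p^{i-1}(1-p)/q_{2^i-1}(\lambda)$. So the first step is to record that $\lambda\in\sigma_{r,\ell^1}(S_p)$ if and only if the sequence $(1/q_k(\lambda))_{k\ge0}$ is bounded and satisfies~\eqref{qs1}; equivalently $(q_k(\lambda))_k$ is bounded below away from $0$ and the identity~\eqref{qs1} holds. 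Note that whenever $q_k(\lambda)\neq 0$ for all $k$, the vector $u=(1/q_k(\lambda))_k$ automatically solves all the coordinate equations by the self-similar structure used in Proposition~\ref{Spr}; the only genuine requirement is boundedness in $\ell^\infty$ together with~\eqref{qs1}.

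The second step identifies a concrete dense countable family inside $\sigma_{r,\ell^1}(S_p)$: the backward orbit $\bigcup_{n\ge0}f^{-n}\{1\}$ of the fixed point $1$ of $f$. Recall from the remark after Proposition~\ref{specte} that $q_{2^n}(\lambda)=h\circ f^{n-1}(\lambda)$ with $h(x)=\frac{x}{p}-\frac{1-p}{p}$, and that $q_n(\lambda)=\prod q_{2^{n_i}}(\lambda)$ over the binary digits of $n$. If $\lambda\in f^{-n}\{1\}$, then $f^{m}(\lambda)=1$ for all $m\ge n$, so $q_{2^m}(\lambda)=h(1)=1/p-(1-p)/p=1$ for all $m\ge n+1$; hence the sequence $(q_{2^m}(\lambda))_m$ takes the value $1$ eventually and only finitely many values in total, none of which is $0$ (since $0$ is not in any finite backward orbit of $1$ under $f$ — indeed $f(z)=0$ forces $z=1-p$, and one checks $1-p\notin f^{-k}\{1\}$; more simply, $\lambda\in J(f)$ and the $q_{2^m}$ stay in $\overline{\D(0,1)}$, while being eventually $1$, so are bounded below). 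Therefore $(q_n(\lambda))_n$ is a product of finitely many nonzero bounded-below factors, hence bounded above and bounded below, so $(1/q_n(\lambda))_n\in\ell^\infty$. It remains to verify~\eqref{qs1} for these $\lambda$; this is the one point requiring a short computation, and I expect it to follow from the defining relation of the $q_k$ together with $q_{2^i-1}=q_{2^{i-1}}\cdots q_1$ and the eventual constancy — essentially~\eqref{qs1} is itself a rearrangement of the recursion~\eqref{for5} specialized at the fixed point. The density of $\bigcup_n f^{-n}\{1\}$ in $\partial J(f)$ is the classical fact that the backward orbit of any non-exceptional point (here the repelling-or-neutral fixed point $1$, which is not exceptional for a degree-$2$ polynomial) is dense in the Julia set $\partial J(f)$; I will cite this from standard Julia-set theory.

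The third step handles the continuous spectrum by pure set-theoretic bookkeeping: we have the disjoint decomposition $\sigma_{\ell^1}(S_p)=\sigma_{pt,\ell^1}(S_p)\cup\sigma_{c,\ell^1}(S_p)\cup\sigma_{r,\ell^1}(S_p)$, and we have just shown $\sigma_{\ell^1}(S_p)=J(f)$ and $\sigma_{pt,\ell^1}(S_p)=\emptyset$. Hence $\sigma_{c,\ell^1}(S_p)=J(f)\setminus\sigma_{r,\ell^1}(S_p)$, which is the asserted relative complement; and it is nonempty because $\sigma_{r,\ell^1}(S_p)$, being at most countable (it injects into the set of $\lambda$ with $(1/q_k(\lambda))_k\in\ell^\infty$, and the analysis shows this forces, via~\eqref{qs1} and $\vert q_{2^i-1}\vert\ge 1$ arguments as in Proposition~\ref{Spr}, a discrete countable set — in fact one expects exactly $\bigcup_n f^{-n}\{1\}$, which is the content of the Conjecture), cannot exhaust the uncountable set $J(f)$. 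The main obstacle is the middle step: proving that the eventually-constant sequences $q_{2^m}(\lambda)=1$ arising from $\lambda\in f^{-n}\{1\}$ genuinely satisfy the consistency equation~\eqref{qs1}, i.e. that these $\lambda$ are true eigenvalues of $(S_p)'$ on $\ell^\infty$ and not merely formal solutions; once~\eqref{qs1} is checked the rest is either already proved or is standard complex dynamics.
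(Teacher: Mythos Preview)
Your overall strategy matches the paper's: pass to the dual, use the characterization $u_k=\frac{1}{q_k(\lambda)}u_0$ together with the consistency relation~\eqref{qs1}, exhibit the backward orbit $\bigcup_n f^{-n}\{1\}$ inside $\sigma_r$, and invoke density of backward orbits in $\partial J(f)$. Two points, however, are not completed in your write-up and the paper handles them differently.

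First, the verification of~\eqref{qs1} for $\lambda\in f^{-n}\{1\}$. You flag this yourself as ``the main obstacle'' and leave it as an expectation. The paper carries this out: starting from $q_1=\sum_{i\ge 1}\frac{p^{i-1}(1-p)}{q_{2^i-1}}$ and using $q_{2^i-1}=q_{2^0}\cdots q_{2^{i-1}}$, one iterates the recursion~\eqref{for5} to obtain, for every $k\ge 0$,
\[
q_{2^k}=\sum_{i\ge k+1}\frac{p^{i-k-1}(1-p)}{q_{2^k}q_{2^{k+1}}\cdots q_{2^{i-1}}}.
\]
Evaluated at $k=n$ (where $q_{2^m}=1$ for all $m\ge n$), this collapses to $1=\sum_{i\ge 0}p^i(1-p)$, which is trivially true. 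So~\eqref{qs1} does hold for every $\lambda\in E_n$, and your expectation is correct, but the argument needs to be written out.

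Second, and more seriously, your proof that $\sigma_c\neq\emptyset$ does not work. You argue that $\sigma_r$ is countable, citing ``the analysis shows this forces \ldots\ a discrete countable set --- in fact one expects exactly $\bigcup_n f^{-n}\{1\}$, which is the content of the Conjecture.'' But that equality is precisely what remains \emph{open} in the paper; nothing proved so far rules out $\sigma_r$ being uncountable or even all of $J(f)$. The paper avoids this entirely by exhibiting a single explicit point of $J(f)\setminus\sigma_r$: the fixed point $(p-1)^2$ of $f$ lies in $J(f)$, and one computes $q_1((p-1)^2)=p-1$, whence $q_{2^n}((p-1)^2)=p-1$ for every $n$ by~\eqref{for5}; thus $q_n\to 0$ along products of many factors, $(1/q_n)$ is unbounded, and $(p-1)^2\notin\sigma_r$. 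You should replace your countability claim with this (or a similar) explicit witness.
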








\begin{proof}
 The proof of Proposition \ref{Spr}, shows that the residual spectrum of $S_p$ in $\ell^1 $ is equal to the point spectrum  of $S_p$ (acting on right) in ${l^{1}}'  =l^{\infty} $.
 By (\ref{dua}) and (\ref{qs1}),
we see that
$$\sigma_{r}(S_p) =$$
\begin{eqnarray*}
\label{srr}
\left\{\lambda \in \mathbb{C},\; (q_n(\lambda)) {\rm {~and~ }} (1/ q_n (\lambda))  {\rm {~are~bounded~and~ }}
 q_1= \sum_{i=1}^{+\infty} \frac{p^{i-1}(1-p)}{q_{2^i-1}}
   \right \}
\end{eqnarray*}
$$
            =  J (f) \cap \left \{\lambda \in \mathbb{C},\; ( 1/ q_n (\lambda)) \mbox { is  bounded and }
q_1= \sum_{i=1}^{+\infty} \frac{p^{i-1}(1-p)}{q_{2^i-1}}\right \}.
         $$
On the other hand we have

\begin{eqnarray}
\label{fs}
q_{2^n}^{2}= f(q_{2^{n-1}}^2)=\ldots f^{n}(q_1^{2})= f^{n+1}(\lambda),\; \forall n \geq 0.
\end{eqnarray}

Let $n \in \mathbb{N}$ and $ E_n= \{\lambda \in \mathbb{C},\; q_{2^{n}} (\lambda)=1\}$

{\bf Claim 1}:
 $\ds \bigcup_{n=0}^{+\infty} E_n= \bigcup_{n=0}^{+\infty} f^{-n}\{1\}.$

Indeed, let $\lambda \in \mathbb{C}$ such that $f^{n}(\lambda)=1$ for some nonnegative integer $n \geq 1$.
Then, by (\ref{fs}), we have $q_{2^{n-1}}=1$ or $q_{2^{n-1}}=-1$.
From (\ref{for5}), we see that $q_{2^{n-1}}=-1$ implies  $q_{2^{n}}=1.$
Hence $f^{-{n}}\{1\} \subset E_{n-1} \cup E_{n}.$  Since $1 \in E_n$ for all integers $n \geq0$, we conclude that,
$\bigcup_{n=0}^{+\infty} f^{-n}\{1\} \subset  \bigcup_{n=0}^{+\infty} E_n .$ The other inclusion follows from  (\ref{fs}).

{\bf Claim 2}:
$\ds \bigcup_{n=0}^{+\infty} E_n \subset \sigma_r (S_p)$.

Indeed,
assume that $n \in \mathbb{N}$ and $\lambda \in E_n$. Then
by (\ref{for5}), we get that
\begin{eqnarray}
\label{chch}
 q_{2^{k}}= 1, \; \forall k \geq n.
 \end{eqnarray}
But from  (\ref{chch}) and (\ref{produit}), we have that $(q_k(\lambda))_{k \geq 0} \mbox { and } (1/ q_k (\lambda))_{k \geq 0} \mbox { are bounded  }.$
Moreover, we have
\begin{eqnarray*}
 q_1=  \sum_{i=1}^{+\infty} \frac{p^{i-1}(1-p)}{q_{2^i-1}}
&\Longleftrightarrow&  q_2= \sum_{i=2}^{+\infty} \frac{p^{i-2}(1-p) q_1}{q_{2^i-1}}\\
 &\Longleftrightarrow&  q_{2^{k}}=  \sum_{i=k+1}^{+\infty} p^{i-k-1}(1-p)  \frac{ q_{2^{0}} \ldots q_{2^{k-1}}} {  q_{2^i-1}}, \;~~ \forall k \geq 0 \\
                                                    \label{fd}  &\Longleftrightarrow& q_{2^k}=  \sum_{i=k+1}^{+\infty} \frac{ p^{i-k-1}(1-p)} { q_{2^{k} } q_{2^{k+1}}\ldots q_{2^{i-1}}}, \; \forall k \geq 0.
                                                     \end{eqnarray*}
Thus 
$$q_1=  \sum_{i=1}^{+\infty} \frac{p^{i-1}(1-p)} {q_{2^i-1}}
\Longleftrightarrow 1= \sum_{i=0}^{+\infty}p^{i}(1-p).$$
From this $\lambda \in \sigma_r (S_p)$ and the claim 2 is proved.

But $1$ is a repulsor fixed point of $f$, it follows that $\bigcup_{n=0}^{+\infty} f^{-n}\{1\}$ is a dense subset of
the Julia set $\partial J(f)$. By this fact combined with claims 1 and 2, we conclude that the residual spectrum contains a
dense and countable subset of the Julia set $\partial (J(f))$.

On the other hand, $(p-1) ^2 \in J(f)$ since $f((p-1)^2))= (p-1)^2$, but $(p-1) ^2 \not \in \sigma_r(S_p)$ because for any positive integer $n, \; q_{2^n}((p-1) ^2)= (p-1),$
which implies that $\lim q_n= 0$ and hence  $1 / q_n$ is not bounded. Thus $(p-1)^2 \in \sigma_c(S_p)$.
This finishes the proof of the theorem.

\end{proof}

\begin{conj}
We conjecture that  the residual spectrum in $\ell^1 $  equals the set $\bigcup_{n=0}^{+\infty} f^{-n}\{1\}.$
\end{conj}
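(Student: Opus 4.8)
The plan is to prove the equality by establishing the inclusion $\sigma_r(S_p) \subset \bigcup_{n=0}^{+\infty} f^{-n}\{1\}$, since the reverse inclusion is exactly Claim~2 in the proof of Theorem~\ref{l1}. By the characterization of the residual spectrum obtained there, every $\lambda \in \sigma_r(S_p)$ satisfies $\lambda \in J(f)$ and $(1/q_n(\lambda))_n$ bounded; I will use only these two facts. Writing $u_k = q_{2^k}(\lambda)$, relation \eqref{for5} reads $u_{k+1} = g(u_k)$ with $g(u) = \frac{u^2-(1-p)}{p}$, and $\lambda \in J(f)$ forces $|u_k| \le 1$ for all $k$. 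Moreover, by \eqref{produit} one has $\sup_n |1/q_n(\lambda)| = \prod_{k:\,|u_k|<1}|u_k|^{-1}$, so boundedness of $(1/q_n)$ is equivalent to $\prod_{k=0}^{+\infty}|u_k| > 0$. The statement thus reduces to a dynamical claim: if $(u_k)$ is a $g$-orbit with $|u_k|\le 1$ for all $k$ and $\prod_k |u_k|>0$, then $u_N = 1$ for some $N$; indeed $u_N=1$ gives $f^{N+1}(\lambda)=u_N^2=1$ through \eqref{fs}, whence $\lambda \in f^{-(N+1)}\{1\}$.

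To prove the dynamical claim I would first note that $\prod_k|u_k|>0$ implies $\sum_k \log(1/|u_k|)<\infty$, so $|u_k|\to 1$ and every accumulation point of $(u_k)$ lies on the unit circle. The key computation is that a point $u$ with $|u|=1$ satisfies $|g(u)|\le 1$ only for $u=1$ and $u=-1$: writing $u=e^{i\theta}$ one finds $|g(u)|^2 - 1 = \frac{2(1-p)(1-\cos 2\theta)}{p^2}\ge 0$, with equality iff $\cos 2\theta = 1$, and $g(1)=g(-1)=1$. Since the orbit stays in $\overline{\D(0,1)}$, any accumulation point $L$ obeys $|g(L)|\le 1$, so $L\in\{1,-1\}$. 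I would then exclude $-1$: if $u_{k_m}\to -1$ along a subsequence, then after extraction $u_{k_m-1}\to w$ with $g(w)=-1$, whereas every solution of $g(w)=-1$ has $|w|^2 = |1-2p| < 1$, contradicting $|u_{k_m-1}|\to 1$. Hence $u_k\to 1$.

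Finally, $1$ is a repelling fixed point of $g$, since $g'(1) = 2/p > 2 > 1$ for $0<p<1$. An orbit converging to a repelling fixed point must be eventually equal to it: fixing $1<\rho<2/p$ and a neighborhood of $1$ on which $|g(u)-1|\ge \rho\,|u-1|$, the inequality $|u_{k+1}-1|\ge \rho\,|u_k-1|$ holds for all large $k$, so $|u_k-1|\ge \rho^{\,k-k_0}|u_{k_0}-1|$ cannot tend to $0$ unless $u_{k_0}=1$. Thus $u_N=1$ for some $N$, which completes the reduction and hence proves $\sigma_r(S_p) = \bigcup_{n=0}^{+\infty} f^{-n}\{1\}$.

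I expect the main obstacle to be the passage from $|u_k|\to 1$ to $u_k\to 1$, i.e. organizing the accumulation-point analysis rigorously: pinning the circle limit set to $\{1,-1\}$ via the sign of $|g|^2-1$ and then ruling out $-1$ through the strict interiority of $g^{-1}(-1)$. As a byproduct one sees that the summation condition appearing in the characterization of $\sigma_r$ is automatic once $\lambda\in J(f)$ and $(1/q_n)$ is bounded: the defect $D_k$ between $q_{2^k}$ and the series $\sum_{j\ge 0}\frac{p^j(1-p)}{u_k\cdots u_{k+j}}$ satisfies $D_{k+1}=(u_k/p)D_k$, so $|D_k| = |D_0|\,|q_{2^k-1}|/p^k$ would blow up (against the boundedness of $D_k$) unless $D_0=0$. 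This is not needed for the proof but confirms the internal consistency of the characterization used.
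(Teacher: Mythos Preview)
The statement you address is presented in the paper as an open \emph{conjecture}; the authors give no proof and explicitly list it as unresolved after Theorem~\ref{l1}. There is therefore no argument in the paper to compare yours against.

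Your proposal, as written, appears to settle the conjecture. The reduction to the dynamical statement --- that a $g$-orbit $(u_k)$ staying in $\overline{\D(0,1)}$ with $\prod_k|u_k|>0$ must eventually hit $1$ --- is clean and correct: the bound $|u_k|\le 1$ follows from the paper's Remark after Proposition~\ref{specte}, and the equivalence between boundedness of $(1/q_n)$ and $\prod_k|u_k|>0$ is immediate from the multiplicative formula~\eqref{prod_q} once one notes that $\lambda\in\sigma_r(S_p)$ forces every $q_k(\lambda)\neq 0$ (otherwise the dual eigenvector built in the proof of Proposition~\ref{Spr} would vanish). The three dynamical steps are sound: the identity $|g(e^{i\theta})|^2-1=2(1-p)(1-\cos 2\theta)/p^2$ pins circle accumulation points to $\{1,-1\}$; the preimages of $-1$ under $g$ satisfy $|w|^2=|1-2p|<1$, which contradicts $|u_{k_m-1}|\to 1$ and rules out $-1$; and $g'(1)=2/p>1$ makes $1$ repelling, so an orbit converging to $1$ and remaining in a small neighborhood must satisfy $|u_{k+1}-1|\ge\rho|u_k-1|$ for all large $k$, forcing $u_{k_0}=1$.

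Your closing remark --- that the summation condition in the characterization of $\sigma_r(S_p)$ is redundant once $\lambda\in J(f)$ and $(1/q_n)$ is bounded --- is also correct: the recursion $D_{k+1}=(u_k/p)D_k$ yields $|D_k|=|D_0|\,|q_{2^k-1}|/p^k$, and since $|q_{2^k-1}|\ge\prod_{j\ge 0}|u_j|>0$ while $p^k\to 0$, the only way $|D_k|$ can remain bounded (which it does, as $|S_k|\le 1/\prod_{j\ge k}|u_j|$ is uniformly bounded) is $D_0=0$. This sharpens the paper's description of $\sigma_r(S_p)$ and is worth stating explicitly.
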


\vspace{1em}

\section*{\bf Spectrum of $S_p$ acting on the left. }

Phillips Theorem combined with Proposition \ref{residual}, Theorems \ref{spp} and \ref{l1},
leads to the following result.

\begin{thm}
The spectrum of $S_p$ (acting on the left) in the spaces $c_0, c, l^{\alpha}$ where $1 \leq \alpha \leq +\infty$ equals to the filled Julia set $J(f)$.
Precisely:

In $c_0, l^{\alpha}$ where $1 \leq \alpha < +\infty$, the spectrum of $S_p$ equals to the continuous spectrum of $S_p$.

In $c$, the point spectrum of $S_p$ equals $\{1\}$ and the continuous spectrum equals $J(f) \backslash \{1\}$.

In $\ell^{\infty}$, the point spectrum equals to the residual spectrum of $S_p$ in $\ell^1$.

\end{thm}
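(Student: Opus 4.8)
The plan is to deduce everything by passing to Banach-space adjoints and feeding in the results already proved for the right action. Since $S_p$ is a matrix, a one-line computation with the canonical pairings $c_0'=\ell^1$, $(\ell^1)'=\ell^\infty$, $(\ell^\alpha)'=\ell^{\alpha'}$ shows that the adjoint of ``$S_p$ acting on the left'' on a space $X$ is ``$S_p$ acting on the right'' on $X'$, and conversely --- this is precisely the step already used in the proof of Proposition \ref{Spr}, where the adjoint of $S_p$ acting on the right on $c_0$ was identified with the operator $u\mapsto u\,S_p$ on $\ell^1$. Concretely, $S_p$ acting on the left on $c_0$ (resp.\ on $\ell^\alpha$ with $1<\alpha<\infty$, on $\ell^1$, on $\ell^\infty$) is the adjoint of $S_p$ acting on the right on $\ell^1$ (resp.\ on $\ell^{\alpha'}$, on $c_0$, on $\ell^1$), and $c=c_0\oplus\C\,\mathbf 1$ is an invariant splitting for the left action, because $S_p$ is bi-stochastic (hence $\mathbf 1\,S_p=\mathbf 1$) and the left action maps $c_0$ into $c_0$ by Lemma \ref{BanachSteinhaussEasy}. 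So the first step is Phillips' Theorem (Proposition \ref{Phillips}): $\sigma$ of the left action on $X$ equals $\sigma$ of the right action on $X'$, which by Theorems \ref{spp} and \ref{l1} --- and, for $\ell^1$, by Theorem \ref{spp} applied to the predual $c_0$ --- is the filled Julia set $J(f)$ in each of $c_0,\ell^\alpha\ (1\le\alpha\le\infty)$; and on $c$ it is $\sigma(S_p\text{ on }c_0)\cup\{1\}=J(f)$ since $f(1)=1$.

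The second step reads off the point and residual parts from Proposition \ref{residual}. The inclusion $\sigma_r(T)\subset\sigma_{pt}(T')$, applied to $T$ the left action on $c_0$ or on $\ell^\alpha\ (1\le\alpha<\infty)$ and using that the point spectrum of the right action on the dual space vanishes (Theorems \ref{spp},\ref{l1}), forces $\sigma_r=\emptyset$; the reverse inclusion $\sigma_{pt}(T')\subset\sigma_r(T)\cup\sigma_{pt}(T)$, applied to the duals ($\ell^{\alpha'}$, resp.\ $c_0$, which also have empty residual and point spectra), forces $\sigma_{pt}=\emptyset$ --- whence $\sigma=\sigma_c=J(f)$ on $c_0$ and on $\ell^\alpha\ (1\le\alpha<\infty)$. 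For $\ell^\infty=(\ell^1)'$ I would instead apply Proposition \ref{residual} to $T=S_p$ acting on the right on $\ell^1$, whose adjoint is $S_p$ acting on the left on $\ell^\infty$: since $\sigma_{pt}(T)=\emptyset$ by Theorem \ref{l1}, the chain $\sigma_r(T)\subset\sigma_{pt}(T')\subset\sigma_r(T)\cup\sigma_{pt}(T)$ collapses to $\sigma_{pt}(S_p\text{ on the left on }\ell^\infty)=\sigma_r(S_p\text{ on the right on }\ell^1)$, which is the last assertion. On $c$ the splitting $c=c_0\oplus\C\,\mathbf 1$ does the rest: the left action is the identity on the line $\C\,\mathbf 1$, so $\sigma_r(S_p\text{ on }c)=\sigma_r(S_p\text{ on }c_0)=\emptyset$ and $\sigma_{pt}(S_p\text{ on }c)=\sigma_{pt}(S_p\text{ on }c_0)\cup\{1\}$.

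The one step that is not soft functional analysis --- and the main obstacle --- is the point spectrum of the left action on $c_0$ (and hence on $c$), since $c_0$ and $c$ are not dual spaces and Proposition \ref{residual} controls $\sigma_{pt}$ only of an adjoint operator. Here I would go back to the explicit eigenvalue equation: an eigenvector $v$ for $\lambda$ is a nonzero solution of $v\,(S_p-\lambda Id)=0$, and the computation carried out in the proof of Proposition \ref{Spr} shows that any such $v$ satisfies $v_k=v_0/q_k(\lambda)$ for every $k$, with $q_{2^n}(\lambda)^2=f^{\,n+1}(\lambda)$ by \eqref{fs}; in particular $v_0\ne0$. If $v\in c_0$ then $v_k\to0$, so $q_{2^n}(\lambda)\to\infty$, i.e.\ $|f^{\,n+1}(\lambda)|\to\infty$, which contradicts $\lambda\in\sigma(S_p\text{ on }c_0)=J(f)$; hence $\sigma_{pt}(S_p\text{ on }c_0)=\emptyset$, and through the splitting above $\sigma_{pt}(S_p\text{ on }c)=\{1\}$ and $\sigma_c(S_p\text{ on }c)=J(f)\setminus\{1\}$. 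For $v\in c$ one may also argue directly: with $l=\lim_k v_k$, the case $l=0$ is excluded as above, while $l\ne0$ makes $q_k(\lambda)$ tend to a nonzero limit, and then \eqref{for5}, \eqref{produit} and the fact that $1$ is a repelling fixed point of $f$ force $q_k(\lambda)\equiv1$, i.e.\ $\lambda=1$, just as in Case 2 of the proof of Proposition \ref{specte}. Assembling the cases gives the stated description of $\sigma(S_p)$ in $c_0,c$ and $\ell^\alpha\ (1\le\alpha\le\infty)$.
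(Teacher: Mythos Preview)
Your route is exactly the paper's: its entire proof is the single sentence ``Phillips' Theorem combined with Proposition~\ref{residual}, Theorems~\ref{spp} and~\ref{l1}, leads to the following result,'' and you have simply written out what that sentence means. (One harmless slip of wording: the left action on $c_0$ is not \emph{the adjoint of} the right action on $\ell^1$; rather, its adjoint \emph{is} the right action on $\ell^1$, since $c_0$ is not a dual space. Phillips still applies.)

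There is, however, a genuine gap at $X=\ell^1$. You assert that $\sigma_r(T)\subset\sigma_{pt}(T')$ forces $\sigma_r=\emptyset$ because ``the point spectrum of the right action on the dual space vanishes.'' For $T$ the left action on $\ell^1$, the adjoint $T'$ is the right action on $\ell^\infty$, and by Killeen--Taylor (the remark following Proposition~\ref{specte}) one has $\sigma_{pt}(T')=J(f)$, not $\emptyset$; Theorems~\ref{spp} and~\ref{l1} say nothing about $\ell^\infty$. Worse, since you have correctly argued that $\sigma_{pt}(\text{left on }\ell^1)=\emptyset$ (this is precisely the computation carried out in the proof of Proposition~\ref{Spr}), the second inclusion in Proposition~\ref{residual} collapses to an equality and yields
\[
\sigma_r(\text{left on }\ell^1)\;=\;\sigma_{pt}(\text{right on }\ell^\infty)\;=\;J(f).
\]
Thus the spectrum of the left action on $\ell^1$ is entirely residual, not continuous. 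The stated conclusion for $\ell^1$ therefore cannot hold, and your duality argument, carried through carefully, actually exhibits this rather than proving the claim. The remaining cases ($c_0$, $c$, $\ell^\alpha$ for $1<\alpha<\infty$, and $\ell^\infty$) are handled correctly by your argument.
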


\begin{center}
\includegraphics[scale=0.5]{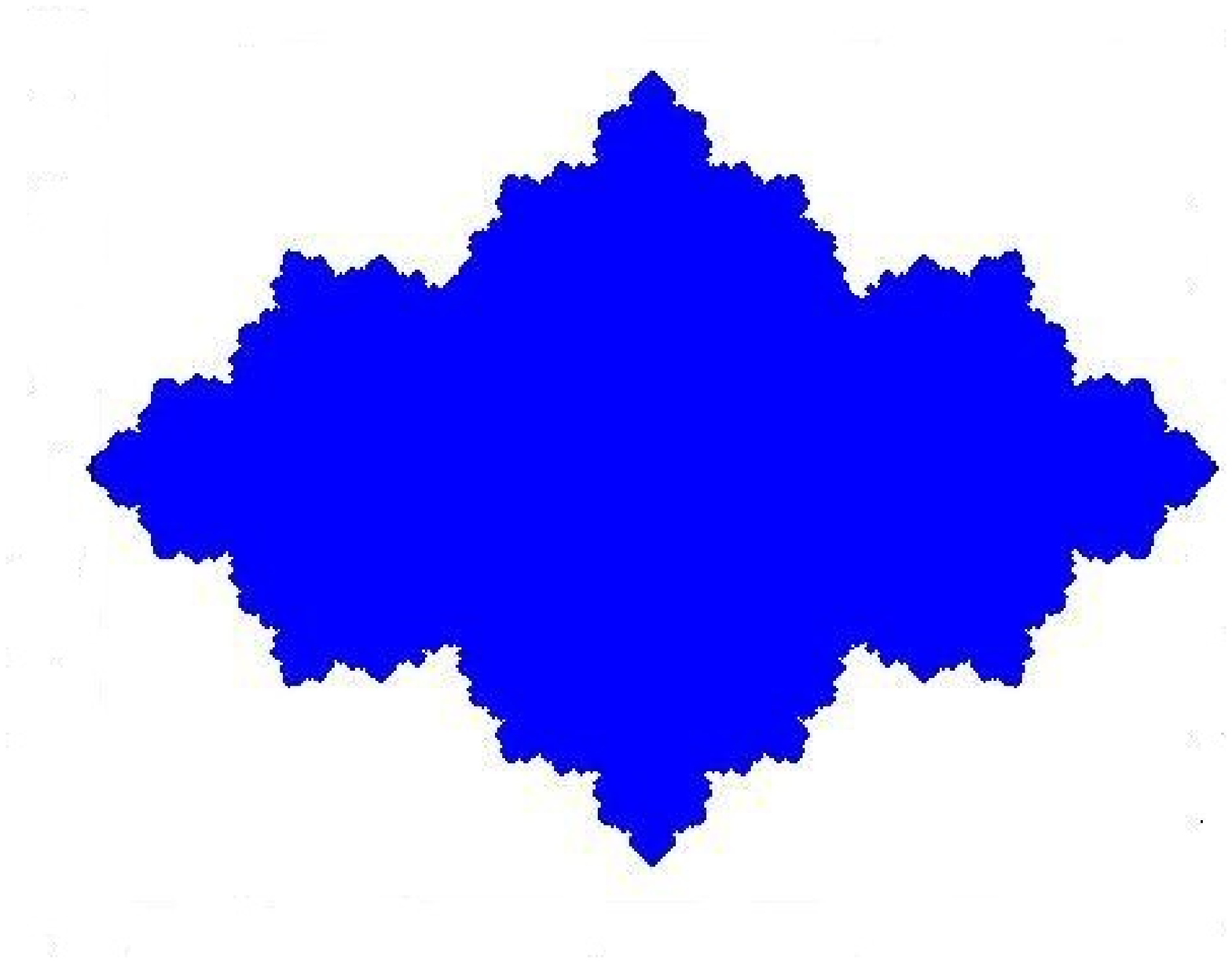}\\
{\footnotesize { \hspace{5.5 em}  Fig.3. Filled Julia set: $p=0.7$}}
\end{center}

\section{ Fibonacci Stochastic adding machine (see \cite{Messaoudi-Smania})}

\vspace{0.5 em}

Let us consider the
Fibonacci sequence $(F_n)_{n \geq 0}$ given by the relation
 $$F_n= F_{n-1}+ F_{n-2} \;\; \forall n \geq 2.$$

Using greedy algorithm, we can write (see \cite{Z})   every
nonnegative integer $N$ in a unique way as $ N=\ds \sum_{i=0}^{k(N)}
\varepsilon_{i}(N)F_i$ where $\varepsilon _{i}(N)= 0$ or $1$ and
$\varepsilon _{i}(N)\varepsilon _{i+1}(N) \ne 0, \; \forall 0 \leq
i \leq k(N)-1$.

It is known that the addition of $1$ in base $(F_n)_{n \geq 0}$
(called Fibonacci adding machine) is given by a finite  state automaton transductor on $A^{*} \times A^{*}$ where $A=\{0,1\}$
(see Fig.4). This transductor is formed by two states ( an
initial state $I$ and a terminal state $T$). The initial state is
connected to itself by $2$ arrows. One of them is labeled by
$(10,00)$ and the other by $(101,000)$. There are also $2$ arrows
going from the initial state to the terminal one. One of these
arrows is labeled by $(00,01)$ and the other by $ (001, 010)$. The
terminal state is connected to itself by $2$    arrows. One of them
is labeled by $(0,0)$ and the other by $(1,1)$.

 Assume that $N=
\varepsilon_{n}\ldots \varepsilon_0$. To find the digits of $N+1$,
we will consider
  the finite path  $c=
(p_{k+1}, a_{k}/ b_{k}, p_{k})  \ldots (p_2, a_1/ b_1, p_1)(p_1, a_0
/ b_0, p_0)$ where $ p_i \in \{I, T\},\; p_0= I,\; p_{k+1}= T,\;
a_i, b_i \in A^{*}$ where $A=\{0,1\}$ and the words $a_k \ldots a_0$ and $b_k \ldots
b_0$ have no two consecutive $1$. Moreover  $\ldots 0\ldots 0 a_k
\ldots a_0= \ldots 0 \ldots 0 \varepsilon_{n}\ldots \varepsilon_0$.

Hence $N+1=\varepsilon'_{n}\ldots \varepsilon'_0$, where
 $$\ldots
0\ldots 0 b_k \ldots b_0= \ldots 0\ldots 0 \varepsilon'_{n}\ldots
\varepsilon'_0.$$

Example: If $N= 10= 10010$ then

$$N \mbox { corresponds
to the path } (T, 1/1, T) \; (T, 00/01, I) \;  (I, 10/00,I)  .$$
Hence $N+1= 10100= 11.$







\begin{center}
\includegraphics[width=15cm,height=6cm,keepaspectratio=true]{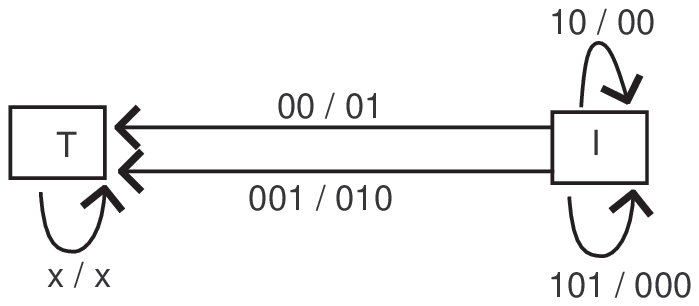}
{ \footnotesize {Fig.4.  Transductor of Fibonacci adding machine
}}
\end{center}

 In \cite{Messaoudi-Smania}, the authors define the stochastic adding machine by the following way:

 Consider "probabilistic"
transductor $\mathcal{T}_{p}$ (see Fig.5) where $0 < p < 1,$ by
the following manner.

The states of $\mathcal{T}_{p}$ are $I$ and $T$. The labels are of the
form $(0/0, 1), (1/1, 1),$\\
$  (a/b, p)$ or $(a/a, 1-p)$ where $a/b$ is a label in $\mathcal{T}$.

 The labeled edges in $\mathcal{T}_{p}$ are of the form $ (T,
(x/x, 1), T)$ where $x \in \{0,1\}$ or of the form $  (r, (a/b, p),
q)$ or $(T, (a/a, 1-p), q)$ where $ (r, a/b , q)$ is a labeled edge
in $\mathcal{T}$, with $q = I$.

The stochastic process $\psi(N)$ is defined  by $\psi(N)=
\sum_{i=0}^{+\infty}r_{i} (N) F_{i}$ where $(r_i(N))_{i \geq 0}$ is an
infinite sequence  of $0$ or $1$ without two $1$ consecutive  and
with finitely many non zero terms.

The sequence $(r_i(N))_{i \geq 0}$ is defined by the following way:

Put $r_i(0)=0$ for all $i$, and assume that we have defined
$(r_i(N-1))_{i \geq 0},\; N \geq 1$. In the transductor $\mathcal{T}_{p}$, consider a path
  $$\ldots (T, (0 /0, 1), T)\ldots (T, (0 /0, 1), T)(p_{n+1}, (a_n / b_n, t_n), p_{n}) \ldots
  (p_1, (a_0 / b_0, t_0), p_0)$$
 \noindent{}where $p_0 = I$ and
 $p_{n+1}= T,$ such that the words  $\ldots r_1(N-1) r_0(N-1)$ and $\ldots 00a_n \ldots
 a_0$ are equal.

 We define the sequence $(r_i(N))_{i \geq 0}$ as the infinite
 sequence whose terms are $0$ or $1$ such that
 $\ldots r_1(N) r_0(N)= \ldots 00b_n \ldots b_0
 .$

  We remark that $\psi(N-1)$ transitions to $\psi(N)$ with  probability of
  $p_{\psi(N-1)\psi(N)}= t_n t_{n-1} \ldots t_0.$

Example 1:  If $N= 10= 10010$, then, in the  transductor of
Fibonacci adding machine, $N$ corresponds to the path $ (T, 1/1, T)
\; (T, 00/01, I) \;  (I, 10/00,I) .$

In the stochastic  Fibonacci adding machine, we have the following
paths (see Figure 2):
 \begin {enumerate}
\item
$(T, (1/1,1), T) \;  (T, (0/0,1), T)(T, (0/0,1), T)\; (T, (10/10,
1-p),I).$ In this case  $N= 10010$ transitions to $10010$ with
probability $1-p$.

\item
 $ (T, (1/1,1), T)\; (T, (00/00,1-p), I)\; (I,
(10/00, p),I)$. In this case  $N= 10$ transitions to $10000= 8$ with
probability $p(1-p)$.

\item
$ (T, (1/1,1), T)\; (T, (00/01,p), I)\; (I, (10/00, p),I)$. In this
case  $N= 10$ transitions to $10100=11$ with probability $p^2$.

\end{enumerate}


\begin{center}
\includegraphics{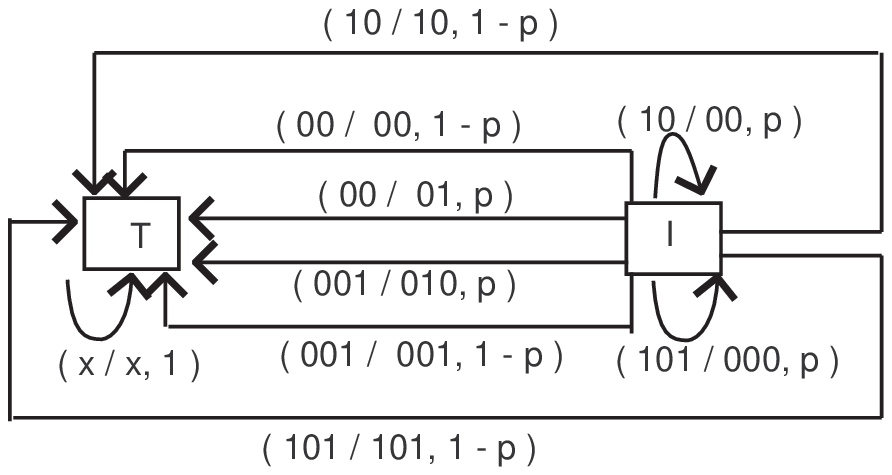}
{\footnotesize {Fig.5. Transductor of Fibonacci fallible adding
 machine }}
\end{center}


\vspace{2em}

By using the transductor $\mathcal{T}_p$, we can  prove the following
result (see \cite {Messaoudi-Smania}).
\begin{prop}
 \label{proba}
Let $N$ be a nonnegative integer, then the following results are
satisfied.
 \begin{enumerate}
\item
$N$ transitions to $N$ with probability $1-p$.
\item
 If $N=\varepsilon _{k}\ldots
\varepsilon _{2}00,\; k \geq 2,$ then $N$ transitions  to $N+1=
\varepsilon _{k}\ldots \varepsilon _{2}01 $ with probability $p$.
 \item
 If $N=\varepsilon _{k}\ldots
\varepsilon _{t}00\underbrace{1010\ldots 1010}_{2s}$ with $s \geq 1$
and $k \geq t \geq 2s+2 $, then $N$ transitions to  $N+1=
\varepsilon _{k}\ldots \varepsilon _{t}01\underbrace{0 \ldots
00}_{2s} $ with probability $p^{s+1}$,
 and $N$ transitions to  $N- \sum_{i=1}^{m}F_{2i-1}= N-F_{2m}+1= \varepsilon _{k}\ldots
\varepsilon _{t}00\underbrace{10 \ldots 10}_{2s-2m}\underbrace{0
\ldots 00}_{2m}$,\\
$ 1 \leq m \leq s$
 with  probability
$p^{m}(1-p).$
\item
If $N=\varepsilon _{k}\ldots
 \varepsilon _{t}0\underbrace{0101\ldots 0101}_{2s},\; s \geq 2$ and $k \geq t \geq 2s+1 $,
then $N$ transitions to  $N+1= \varepsilon _{k}\ldots
 \varepsilon _{t}0\underbrace{1000\ldots 000}_{2s}$ with  probability
 $p^{s}$,
 and $N$ transitions to  $N- \sum_{i=0}^{m}F_{2i}= N-F_{2m+1}+1= \varepsilon _{k}\ldots
\varepsilon _{t}00\underbrace{10 \ldots 10}_{2s-2m}\underbrace{0
\ldots 00}_{2m-1}, \; 2 \leq m \leq s$ with  probability
$p^{m-1}(1-p).$
\item
If $N=\varepsilon _{k}\ldots \varepsilon _{3}001,\; k \geq 3,$ then
$N$ transitions  to $N+1= \varepsilon _{k}\ldots \varepsilon _{3}010
$ with probability $p$.
\end{enumerate}
\end{prop}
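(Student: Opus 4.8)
The plan is to prove Proposition~\ref{proba} by a direct inspection of the probabilistic transductor $\mathcal{T}_p$. For a nonnegative integer $N$ written in the Fibonacci base, adding $1$ corresponds to a unique accepting path in the underlying deterministic transductor $\mathcal{T}$; in $\mathcal{T}_p$ this path splits into finitely many paths according to where the carry is interrupted, and the probability of each such path is the product of the probabilities labelling its edges. So first I would fix $N$ of one of the five listed shapes, identify the path in $\mathcal{T}$ realising $N\mapsto N+1$, and enumerate the paths in $\mathcal{T}_p$ that project onto an initial segment of it.

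For item $(1)$ the point is simply that the first productive edge of $\mathcal{T}_p$ always comes with a companion escape edge of the form $(a/a,1-p)$ leading into the initial state, after which the higher digits are copied verbatim in the terminal state $T$ with probability $1$; reading this edge gives $p_{\psi(N)\psi(N)}=1-p$. Items $(2)$ and $(5)$ are the ``one move'' cases: the path uses a single non-trivial edge, namely $(00,01)$ when $N$ ends in $00$ and $(001,010)$ when $N$ ends in $001$, each of probability $p$, followed by cost-free copying in $T$. I expect these to be short.

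The substance is in items $(3)$ and $(4)$. Starting from the initial state $I$, the path in $\mathcal{T}$ rewrites the low block by repeatedly using the loop $(10,00)$ (resp.\ $(101,000)$) and is then completed by the edge $(00,01)$ (resp.\ $(001,010)$) into $T$; counting these edges and multiplying their probabilities $p$ in $\mathcal{T}_p$ gives the transition to $N+1$ probability $p^{s+1}$ (resp.\ $p^{s}$). If instead the escape edge of probability $1-p$ is taken after an initial stretch of the low block has been rewritten, the process halts at the integer obtained by zeroing out that stretch; using the identities $\sum_{i=1}^{m}F_{2i-1}=F_{2m}-1$ and $\sum_{i=0}^{m}F_{2i}=F_{2m+1}-1$ (valid with the convention $F_0=1,\ F_1=2$), one checks that this integer is $N-F_{2m}+1$ (resp.\ $N-F_{2m+1}+1$), reached with probability $p^{m}(1-p)$ (resp.\ $p^{m-1}(1-p)$) for $m$ in the stated range. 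Finally I would observe that the paths listed are the only ones available, so that the displayed probabilities exhaust the transition law out of $N$ and, as a consistency check, sum to $1$.

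The main obstacle, such as it is, lies in the bookkeeping at the junction between the initial state $I$ and the terminal state $T$: one must be careful that the escape edge is taken at the correct position, that the high-order digits $\varepsilon_k\ldots\varepsilon_t$ are merely copied and not rewritten, and that the two Fibonacci identities above are applied with the correct indexing, which is what pins down the exponents $s+1$ versus $s$ and $m$ versus $m-1$. Once these are settled, each of the five cases reduces to a finite and routine verification, and the uniqueness of the relevant paths follows from the determinism of $\mathcal{T}$ together with the normalisation of Fibonacci representations that forbids two consecutive $1$'s.
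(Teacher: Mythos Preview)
Your approach is exactly what the paper indicates: the paper does not actually prove this proposition but simply states that it follows ``by using the transductor $\mathcal{T}_p$'' and refers to \cite{Messaoudi-Smania} for the details. Tracing paths in $\mathcal{T}_p$ and multiplying the edge probabilities, as you propose, is therefore the intended argument, and your treatment of items $(1)$, $(2)$, $(3)$, $(5)$ is correct.

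There is, however, a concrete error in your plan for item~$(4)$. The loop $(101,000)$ is used only \emph{once}, not repeatedly. If $N$ ends in $0\,\underbrace{0101\ldots 01}_{2s}$, the first block read from $I$ is $\varepsilon_2\varepsilon_1\varepsilon_0=101$, but after that the next two digits are $\varepsilon_4\varepsilon_3=10$ (for $s\ge 3$), so one must switch to the loop $(10,00)$. The accepting path is therefore
\[
(101/000)\;\text{once},\quad (10/00)\;\text{exactly }s-2\text{ times},\quad \text{then }(00/01)\ \text{(not $(001/010)$)}
\]
into $T$, for a total of $s$ productive edges and probability $p^{s}$. The escape after the first $m-1$ of these edges then yields the transitions with probability $p^{m-1}(1-p)$ for $2\le m\le s$. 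This is a bookkeeping correction, not a change of method; once it is made your argument goes through. (A small slip of language: in the paper's convention the escape edges $(T,(a/a,1-p),I)$ go \emph{from} $I$ \emph{to} $T$, not ``into the initial state''.)
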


By Proposition \ref{proba}, we construct the transition graph. We also find the transition operator $S_p$ associated to the  transition graph.



$$\hspace{-5 mm }\tiny { 
\left(\begin{array}{cccccccccccccccccc}
    1-p& p&0&0&0&0 &0&0 &0&0 &0&0 &0\ldots \\
    0& 1-p & p&0&0 &0&0 &0&0 &0&0&0 &0\ldots \\
    p(1-p) & 0 & 1-p&p^2&0&0&0 &0&0 &0&0 &0 &0 \ldots\\
    0& 0& 0& 1-p&p&0&0 &0&0 &0&0 &0 &0 \ldots\\
    p(1-p)& 0& 0& 0& 1-p&p^2&0&0 &0&0 &0&0 &0  \ldots\\
    0& 0& 0&  0& 0 & 1-p&p&0&0 &0&0 &0 &0 \ldots\\
    0& 0& 0&  0& 0 &0& 1-p&p&0&0 &0&0 &0 \ldots\\
    p^2(1-p)& 0& 0& 0&0& p(1-p)&0& 1-p&p^3&0&0 &0 &0  \ldots\\
    0& 0& 0&  0& 0 & 0&  0& 0 & 1-p&p&0&0 &0 \ldots\\
    0& 0& 0&  0& 0 &0& 0&  0& 0& 1-p&p&0 &0 \ldots\\
    0& 0& 0& 0&0& p(1-p)&0&0& p(1-p)&0& 1-p&p^2 &0  \ldots\\
    \vdots &\vdots &\vdots &\vdots &\vdots &\vdots &\vdots &\vdots
    &\vdots &\vdots &\vdots &\vdots &\vdots
 \end{array}
\right)}
$$$$
{\rm {\footnotesize {Fig.6.~~Transition~graph~of~stochastic~adding~machine~in~Fibonacci~base.}}}
$$

\normalsize

 \vspace{2em}

\begin{rem}
In \cite{Messaoudi-Smania}, the authors prove that
the point spectrum of $S_p $ in
$\ell^{\infty}$ is  equal to the set $ \K_p=  \{ \lambda \in
\mathbb {C}, \;  (q_n (\lambda))_{n \geq 1} \mbox { is bounded }\}$,
 where $q_{F_{0}}(z)= z,\; q_{F_{1}} (z)=z^2,\; q_{F_{k}} (z)= \ds \frac{1}{p}q_{F_{k-1}} (z)q_{F_{k-2}} (z)-\ds\frac{1-p}{p},$ for all $k \geq 2$
and for all nonnegative integers $n$, we have $q_n (z)= q_{F_{k_{1}}} \ldots q_{F_{k_{m}}} $ where $ F_{k_{1}}+ \cdots+ F_{k_{m}}$ is the Fibonacci representation of $n$. 
In particular, $\sigma_{pt(S_{p})}$ is contained in  the set
\begin{eqnarray*}
 \mathcal {E}_p &= &
\{
\lambda \in \mathbb{C} \; \vert \; (q_{F_{n}} (\lambda))_{n \geq 1} \mbox { is bounded } \}\\
                & = &
\{
\lambda \in \mathbb{C} \; \vert \; ( \lambda_1, \lambda) \in
J (g)\}
\end{eqnarray*}
 where  $J(g)$ is the filled Julia set of the function $g:
\mathbb{C}^2  \mapsto \mathbb{C}^2 $  defined by:
$g(x,y)=(\frac{1}{p^{2}}(x- 1+p)(y- 1+p), x)$ and $\lambda_1=1-p
+\frac{ (1- \lambda- p)^2}{p}.$
They also investigated the topological properties of  $\mathcal {E}_p$.

\end{rem}

\begin{prop}
\label{ptfib}
The operator $S_p$ is well defined in the Banach spaces $c_{0},c$
and  $\ell^{\alpha},\; \alpha \geq 1$.
The point spectra of $S_p$ acting in the spaces $c_{0}$,
and  $\ell^{\alpha}$  associated to the stochastic Fibonacci adding machines
  are empty sets. In $c$, the point spectrum equals $\{1\}$.
\end{prop}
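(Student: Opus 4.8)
The plan is to mirror the structure of the base-$2$ argument (Proposition \ref{specte}), using the Fibonacci transition data from Proposition \ref{proba} in place of Lemma \ref{Ktlemma}. First I would establish that $S_p$ is well defined and bounded on $c_0,c$ and $\ell^\alpha$: since $S_p$ is bi-stochastic (each row sums to $1$ by Proposition \ref{proba}, and one checks from the explicit matrix in Fig.6 that each column sum is also finite and bounded, in fact bounded by $1$), this is an immediate application of Lemma \ref{BanachSteinhaussEasy} with $M=1$. So the real content is the computation of the point spectrum.

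Suppose $\lambda$ is an eigenvalue of $S_p$ with eigenvector $v=(v_i)_{i\ge 0}\in X$, $X\in\{c_0,c,\ell^\alpha\}$. As in the base-$2$ case, because $S_p$ is lower-Hessenberg-like — the band structure visible in Fig.6 shows $(S_p)_{i,j}=0$ whenever $j>i+1$ — the equations $((S_p-\lambda \mathrm{Id})v)_{k}=0$ for $k=0,1,2,\dots$ can be solved recursively: reading the $k$-th equation expresses $v_{k+1}$ as a linear combination of $v_0,\dots,v_k$, so by induction there are polynomials $q_k=q_k(p,\lambda)$ with $v_k=q_k v_0$, where $q_0=1$. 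Then, evaluating the eigenvalue equation along the subsequence of indices $N=F_n-1$ (the Fibonacci analogue of $2^n$ used in \eqref{for2}) and exploiting the self-similarity of the transition matrix — namely that for $F_{n-1}\le i,j<F_n$ the transition probability from $i$ to $j$ equals that from $i-F_{n-1}$ to $j-F_{n-1}$, which follows from Proposition \ref{proba} exactly as in the base-$2$ discussion — one derives the recursion $q_{F_k}=\frac1p q_{F_{k-1}}q_{F_{k-2}}-\frac{1-p}{p}$ with $q_{F_0}=\lambda$, $q_{F_1}=\lambda^2$ (after identifying the initial values with $\lambda$ via a change of variable as in the Remark following Proposition \ref{Spr}), together with the product formula $q_n=q_{F_{k_1}}\cdots q_{F_{k_m}}$ over the Zeckendorf expansion $n=F_{k_1}+\cdots+F_{k_m}$. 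This reproduces the sequences of \cite{Messaoudi-Smania}.

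Now I would separate cases. If $v\in c_0$ or $\ell^\alpha$ with $\alpha\ge 1$, then $v_n\to 0$, hence $q_n\to 0$; in particular $q_{F_n}\to 0$, and plugging into $q_{F_k}=\frac1p q_{F_{k-1}}q_{F_{k-2}}-\frac{1-p}{p}$ forces $0=-\frac{1-p}{p}$, i.e. $p=1$, a contradiction; so $\sigma_{pt}(S_p)=\emptyset$. If $v\in c$, then $q_n\to l\in\C$ for some limit $l$; from the recursion any such $l$ satisfies $l=\frac1p l^2-\frac{1-p}{p}$, so $l\in\{1,p-1\}$. Using the product formula on the index $F_{n-2}+F_n$ (valid Zeckendorf sum since these are non-consecutive) gives $\lim q_{F_{n-2}+F_n}=l^2$, which must also equal $l$, ruling out $l=p-1\ne 1$; and $l=1$ is attained, e.g. by the constant eigenvector for $\lambda=1$ since $S_p$ is stochastic. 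Hence $\sigma_{pt,c}(S_p)=\{1\}$.

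The main obstacle is purely bookkeeping rather than conceptual: Proposition \ref{proba} distinguishes several shapes of Zeckendorf words (blocks $\dots 00$, $\dots 1010\dots$, $\dots 0101\dots$, $\dots 001$), so verifying the self-similarity claim and carrying out the induction that gives $v_k=q_kv_0$ and the recursion for $q_{F_k}$ requires a careful case analysis over these word types — this is the analogue of the multi-case induction in the proof of Proposition \ref{Spr}. Once the polynomials $q_k$ and the product formula are in hand, the spectral conclusions follow exactly as in the base-$2$ proof, so I would organize the write-up to isolate the combinatorial lemma (self-similarity + recursion) first and then deduce the three cases $c_0$, $\ell^\alpha$, $c$ in a few lines each.
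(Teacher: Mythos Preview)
Your approach is essentially the same as the paper's: both use the lower-Hessenberg structure to get $v_k=q_kv_0$, invoke self-similarity (the paper cites \cite{Messaoudi-Smania} for the identity $c_k=q_k$) to obtain the recursion $q_{F_k}=\frac1p q_{F_{k-1}}q_{F_{k-2}}-\frac{1-p}{p}$, and then run the identical limit arguments for $c_0$, $\ell^\alpha$ and $c$.

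There is one factual slip you should correct. You assert that $S_p$ is bi-stochastic with column sums bounded by $1$, and apply Lemma \ref{BanachSteinhaussEasy} with $M=1$. In the Fibonacci case this is false: the paper computes the column sums explicitly from Proposition \ref{proba} and obtains only $s_n\le 1+2p$ (and later, in the open-questions paragraph, stresses that ``the matrix $S_p$ is not bi-stochastic''). Looking at Fig.6, for instance, column $5$ receives contributions $p^2$, $1-p$, $p(1-p)$, $p(1-p),\dots$, which already exceed $1$. This does not damage the argument---Lemma \ref{BanachSteinhaussEasy} only needs a uniform bound $M<\infty$, and $M=1+2p$ works---but you lose the conclusion $\|S_p\|\le 1$, and you should verify the column bound by the case analysis on Zeckendorf words rather than asserting bi-stochasticity.
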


\begin{proof}

By Proposition \ref{proba}, we can prove that
  the sum of coefficients of every column of $S_p$ is bounded by a fixed constant $M >0$.

 Indeed, let $n \in \mathbb{N}$ and $s_n= \sum_{i=0}^{+\infty} p_{i,n}$ be the sum of coefficients of the $n$-th column.

 If $n= \varepsilon _{k}\ldots \varepsilon _{2}01 $ or $n= \varepsilon _{k}\ldots \varepsilon _{3}010 $ (Fibonacci representation), then by  1), 2) and 5) of Proposition \ref{proba}, we have $s_n=1$

 If $n= \varepsilon _{k}\ldots \varepsilon _{t}01\underbrace{0 \ldots
00}_{s},\; s \geq 2 $, then  for all integers $i \in \mathbb{N},\;   p_{i,n} >0$ implies that $i= n$ or $i=n-1$ or $i= \varepsilon _{k}\ldots \varepsilon _{t}01  \underbrace {0 \ldots 0}_{s-2m} \underbrace{01 \ldots
01}_{2m},\; s \geq 2m$ or $i= \varepsilon _{k}\ldots \varepsilon _{t}01  \underbrace {0 \ldots 0}_{s-2m} \underbrace{10 \ldots
10}_{2m},\; s \geq 2m$.
Hence $ s_n \leq 1-p + p^ {\lceil 2 \rceil}+  2 \sum_{m=1}^{\infty} p^{m}(1-p) \leq 1+ 2 p.$

If $n=0$, then $s_n \leq 1+p.$

On the other hand, since $S_p$ is a stochastic matrix, then by Proposition \ref{defia}, $S_p$ is well defined in the spaces $c_{0}, \; c$
(resp. in $\ell^{\alpha},\; \alpha \geq 1)$.

Now, let $\lambda$ be an eigenvalue of $S_p$ in $ X$ where $X \in  \{c_{0}, c, \ell^{\alpha},\; \alpha \geq  1\}$  associated to the eigenvector
$v= (v_i)_{i \geq 0} \in X$. Since the
transition probability from any nonnegative integer $i$ to any
integer $i+k,\; k \geq 2$ is $p_{i,i+k}= 0$ (see Proposition
\ref{proba}), the operator $S_p$ satisfies $(S_{p})_{i,i+k}=0$ for all $i,k
\in \mathbb{N}$ with $k \geq 2$. Thus for every integer $k \geq 1,$
we have
\begin{eqnarray}
\label{rrr} \sum_{i=0}^{k}p_{k-1,i} v_i = \lambda v_{k-1}.
\end{eqnarray}
 Then, we can prove by induction on $k$ that for any
integer $k \geq 1$, there exists a complex number $c_{k}=c_k(p,
\lambda)$ such that
\begin{eqnarray} \label{for4}
v_k= c_k v_0
\end{eqnarray}
Using the fact that the matrix $S_p$ is auto-similar, we can prove that $c_k= q_k$ for all integers $k \in \mathbb{N}$ (see Theorem 1, page 303, \cite{Messaoudi-Smania}).
Since $$ q_{F_{n}} (z)= \ds \frac{1}{p}q_{F_{n-1}} (z)q_{F_{n-2}} (z)-\ds\frac{1-p}{p},\; \forall n \in \mathbb{N},$$
and $(q_{F_{n}})$ converges to $0$ when $n$ goes to infinity, we obtain that
the point  spectrum of $S_p$ acting in $c_{0}$
(resp. in $\ell^{\alpha},\; \alpha \geq 1$) is
   empty. Using the same idea than proposition \ref{specte}, we see that $\sigma_{pt, c}= \{1\}$.

\end{proof}

\begin{rem}
By Phillips Theorem and duality, it follows that the  spectra of $S_p$ acting in $X$ where $X \in \{ c_{0},\; c,\; l^{1},\;
 l^{\infty}\}$ associated to the stochastic Fibonacci adding machine are equals.

\end{rem}

\begin{thm}
\label{ptfib}
The  spectra of $S_p$ acting in $X$ where $X \in  \{l^{\infty}, c_{0}, c, l^{\alpha}, \alpha \geq 1\}$   contain the set $\mathcal{E}_{p}= \{\lambda \in \mathbb{C},\;
(q_{F_n}(\lambda) )_{n \geq 0} \mbox { is bounded } \}$.
 \end{thm}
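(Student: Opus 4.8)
The plan is to transport the argument of Proposition~\ref{Spc} to the Fibonacci operator. I will show that every $\lambda\in\mathcal{E}_p$ belongs to the approximate point spectrum of $S_p$ acting on $c_0$ and on $\ell^{\alpha}$, $1\le\alpha<\infty$; by Phillips' Theorem and the duality remark above, the spectra of $S_p$ on $c_0,c,\ell^1,\ell^\infty$ coincide, so this also yields $\mathcal{E}_p\subset\sigma(S_p)$ on $c$ and on $\ell^\infty$. Fix $\lambda\in\mathcal{E}_p$. We may assume the sequence $(q_n)=(q_n(\lambda))$ is unbounded, for otherwise $\lambda\in\mathcal{K}_p=\sigma_{pt,\ell^\infty}(S_p)\subset\sigma(S_p)$ and there is nothing to prove. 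I record what is needed about $(q_n)$: the vector $w=(q_0,q_1,q_2,\dots)$ with $q_0=1$ formally satisfies $S_p w=\lambda w$, i.e. $\sum_j (S_p)_{i,j}q_j=\lambda q_i$ for all $i\ge0$ (each row having finitely many nonzero entries) — this is \eqref{for4} together with Theorem~1 of \cite{Messaoudi-Smania}; by Proposition~\ref{proba} every row $i$ of $S_p$ is supported in columns $i$, $i+1$ and finitely many columns $<i$, so $(S_p)_{i,j}=0$ whenever $j\ge i+2$; and since $(\lambda_1,\lambda)\in J(g)$ with $J(g)$ bounded, $\lambda$ ranges over a bounded set and $\sup_n|q_{F_n}|=:R<\infty$.

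For $k\ge1$ set $w^{(k)}=(q_0,q_1,\dots,q_k,0,0,\dots)$ and $u^{(k)}=w^{(k)}/\|w^{(k)}\|$ in the ambient space; note $\|w^{(k)}\|\ge|q_0|=1$ and that $\|w^{(k)}\|$ is non-decreasing in $k$. Exactly as in the proof of Proposition~\ref{Spc}, the bandwidth of $S_p$ and the identity $S_p w=\lambda w$ give $\big((S_p-\lambda Id)w^{(k)}\big)_i=0$ for $0\le i\le k-1$, while for $i\ge k$ only the columns $j\le k$ survive, so $\big((S_p-\lambda Id)w^{(k)}\big)_i=\sum_{j=0}^{k}(S_p-\lambda Id)_{i,j}q_j$. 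Applying the Hölder bound \eqref{tt} with $a_{i,j}=|(S_p-\lambda Id)_{i,j}|$ (row sums finite since $S_p$ is stochastic and $\lambda$ bounded) yields, for $1\le\alpha<\infty$,
\[
\big\|(S_p-\lambda Id)u^{(k)}\big\|_\alpha^\alpha\ \le\ \frac{C}{\|w^{(k)}\|_\alpha^\alpha}\sum_{j=0}^{k}|q_j|^\alpha\,A_{k,j},\qquad A_{k,j}:=\sum_{i\ge k}\big|(S_p-\lambda Id)_{i,j}\big|,
\]
and a simpler sup-norm estimate of the same shape for $c_0$. Thus matters reduce to controlling the tail column sums $A_{k,j}$ weighted by $|q_j|^\alpha$.

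Now take $k=F_n$ and let $n\to\infty$. The combinatorial core of the proof is a description of $A_{F_n,j}$ obtained from Proposition~\ref{proba} by inspecting the Zeckendorf expansions of $i$ and $j$: one shows $A_{F_n,j}=0$ except for $j$ in a thin set $\Sigma_n\subset\{0,1,\dots,F_n\}$; for $j=F_n$ one has $A_{F_n,F_n}=|1-p-\lambda|+\sum_{m\ge1}p^m(1-p)\le C$, harmless because $|q_{F_n}|\le R$; the column $j=0$ is reached from a row $i\ge F_n$ only when $i=F_{2m}-1$ with $2m$ large, so $A_{F_n,0}$ is a geometric tail and $A_{F_n,0}\to0$; and every other $j\in\Sigma_n$ arises as the endpoint of a long carry $i\to i-F_{2m}+1$ (or $i\to i-F_{2m+1}+1$) with $i\ge F_n$, which forces $j$ either to be small or to sit just below $F_n$ with a Zeckendorf expansion of few digits, so that by the multiplicativity $q_j=\prod q_{F_{k_i}}$ the numbers $|q_j|$, $j\in\Sigma_n$, are of at most polynomial size in $n$, while the corresponding $A_{F_n,j}$ carry a factor $p^{m}(1-p)$ with $m$ growing. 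From these facts the numerator $\sum_{j\in\Sigma_n}|q_j|^\alpha A_{F_n,j}$ is negligible compared with $\|w^{(F_n)}\|_\alpha^\alpha$: indeed $\|w^{(F_n)}\|_\alpha\to\infty$, since otherwise the monotone sequence $\big(\|w^{(k)}\|_\alpha\big)_k$ would be bounded and $w=(q_j)_{j\ge0}$ would be a genuine eigenvector of $S_p$ in $\ell^\alpha$ (resp. $c_0$), contradicting the emptiness of the point spectrum (cf. Proposition~\ref{ptfib}); and because $(q_n)$ is unbounded this growth is in fact fast relative to the controlled size of the $|q_j|$ for $j\in\Sigma_n$. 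Hence $\big\|(S_p-\lambda Id)u^{(F_n)}\big\|\to0$, so $\lambda$ lies in the approximate point spectrum of $S_p$; the cases $c$ and $\ell^\infty$ follow by Phillips' Theorem as explained above, which finishes the argument.

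The step I expect to be the main obstacle is exactly the quantitative analysis of $\Sigma_n$ and of the sums $A_{F_n,j}$. In the base-$2$ case the analogue of $\Sigma_n$ is only $\{0,2^n\}$ (see the proof of Proposition~\ref{Spc}), and the error term is manifestly bounded; in the Fibonacci base, carries of unbounded length make several columns $j<F_n$ contribute to $A_{F_n,j}$, and one has to combine a careful reading of Proposition~\ref{proba} with the Zeckendorf-sparseness of those columns (to keep $|q_j|$ small) and with the growth of $\|w^{(F_n)}\|$ in order to make the error genuinely tend to zero, not merely stay bounded. It is the difficulty of doing this sharply — and of establishing the reverse inclusion at all — that leaves $\sigma(S_p)=\mathcal{E}_p$ only as a conjecture.
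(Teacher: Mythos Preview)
Your overall strategy is exactly the paper's: truncate the formal eigenvector to $w^{(F_n)}$, normalise, and show $\|(S_p-\lambda Id)u^{(F_n)}\|\to 0$, reducing everything to the tail column sums $A_{F_n,j}$. The difference is that you stop short of the key combinatorial fact and instead speculate about a ``thin set $\Sigma_n$'' and unproved polynomial bounds on $|q_j|$, $j\in\Sigma_n$. In fact the Fibonacci case is just as clean as base~$2$: one has $A_{F_n,j}=0$ for every $0<j<F_n$, so $\Sigma_n=\{0,F_n\}$. The reason is that in every ``fall-back'' transition of Proposition~\ref{proba}, the high-order Zeckendorf digits (the prefix $\varepsilon_k\ldots\varepsilon_t$) are preserved; hence if $i\ge F_n$ because of a nonzero prefix digit at position $\ge n$, so is $j$; and if $i\ge F_n$ only through its alternating tail, then the sole fall-back landing below $F_n$ is $j=0$. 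With $\Sigma_n=\{0,F_n\}$ the estimate is immediate: $A_{F_n,0}\le 2p^{\lceil (n+1)/2\rceil}\to 0$, $A_{F_n,F_n}\le |1-p-\lambda|+2$, and $|q_{F_n}|$ is bounded since $\lambda\in\mathcal{E}_p$, while $\|w^{(F_n)}\|\to\infty$ by the emptiness of the point spectrum. Your hand-wavy comparison (``polynomial size'' of $|q_j|$ versus ``fast growth'' of $\|w^{(F_n)}\|$) is therefore unnecessary, and as written it is not a proof.

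There is also a minor gap in your opening reduction. Assuming $(q_n)$ unbounded and disposing of the bounded case via $\mathcal{K}_p=\sigma_{pt,\ell^\infty}(S_p)$ only covers $\ell^\infty$ (and $c_0,c,\ell^1$ by Phillips), not $\ell^\alpha$ for $1<\alpha<\infty$. The split is unnecessary anyway: the approximate-eigenvector argument uses only that $(q_{F_n})$ is bounded and that the point spectrum in the ambient space is empty, both of which hold for every $\lambda\in\mathcal{E}_p$ and every $X\in\{c_0,\ell^\alpha\}$.
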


 \begin{proof}

 The proof   is similar to  the proof of Proposition \ref{Spc} and will be done in case $\ell^{\alpha},\; \alpha > 1$.
Let  $\lambda \in \mathcal{E}_p $ and let us prove that $\lambda$ belongs to the approximate point spectrum of $S_p$ in $\ell^{\alpha},\; \alpha > 1$.

\noindent  For every integer $k \geq 2,$ consider $w^{(k)}= (1,q_1 (\lambda), \ldots, q_k (\lambda), 0 \ldots 0, \ldots)^{t} \in l^{\alpha} $
where $(q_k (\lambda))_{k \geq 1}= (q_k)_{k \geq 1}$ is the sequence defined in the proof of Theorem \ref {ptfib}.
Let $u^{(k)}=\ds \frac{w^{(k)}} {\vert \vert  w^{(k)} \vert \vert_{\alpha}}$, then we have the following claim.

\vspace{0.5em}

{\bf Claim:} $\ds \lim_{n\rightarrow+\infty} \vert \vert (S_p- \lambda Id) u^{(F_n)}\vert \vert_{\alpha}=0.$\\
By using the same proof than  Proposition \ref{Spc}, we have

\begin{eqnarray*}
 \left \vert \left \vert {(S_p- \lambda Id) u^{(F_n)}} \right \vert \right \vert^{{\alpha}}_{\alpha}
\leq
\frac{D}{\vert \vert  w^{(F_n)}\vert \vert_{\alpha}^{\alpha}}  \sum_{j=0}^{F_n} \vert w^{(F_n)}_j \vert^{\alpha} B_{F_n,j}
\end{eqnarray*}
where $ D$ is a positive  constant and $ B_{F_n, j}= \sum_{i=F_n}^{+\infty}    \vert (S_p- \lambda Id)_{ij}\vert$.

We can prove by the same manner done in Proposition \ref{Spc} that  for  $0 \leq j \leq F_n$,
 \begin{eqnarray*}
\label{t3}
B_{F_n, j} \ne 0 \Longleftrightarrow j=0 \mbox { or } j=F_n.
\end{eqnarray*}

Indeed,
if $j  \in \{1, \ldots  F_n -1\}$, then since $ i \geq F_n$, we have  $(S_p- \lambda Id)_{ij}= p_{i,j}.$
If the Fibonacci representation of $j$ is  $j= \varepsilon_k \ldots \varepsilon_{2} 01$ or $j= \varepsilon_k \ldots \varepsilon_{t} 1 0 \ldots0$, it is
easy to see by  Proposition \ref{proba}  that $ p_{i,j} \ne 0$ implies $i < F_n$.

On the other hand, if $j=0$ then
$B_{F_n, j}= \sum_{l=F_n}^{+\infty}  p _{l,0} $.
Since $p _{l,0} \ne 0$ if and only $l= F_i -1$ and $p _{F_i -1,0} = p^{\lceil i/2 \rceil} (1-p)$, we have
$B_{F_n, j} \leq 2 \ds \sum_{i=m}^{+\infty}p^{i}(1-p)= 2 p^{m}$ where $m= \lceil (n+1)/2 \rceil.$

Now assume $j=F_n$. In this case, we have
$B_{F_n,j}=\vert 1-p-\lambda\vert+ \ds \sum_{i=F_n+1}^{+\infty}  p_{i, F_n}.$
On the other hand, by Proposition \ref{proba}, we deduce that
$p_{i, F_n} \ne 0$ if and only if $i= F_n+ F_m-1$ where $0 \leq m \leq n$
and $p_{F_n+ F_m-1, F_n} = p^{[m/2]} (1-p).$
Therefore
\begin{eqnarray}
\label{t2}
B_{F_n, F_n}= \vert 1-p-\lambda \vert+  \sum_{m=0}^{n} p^{ \lceil m/2 \rceil} (1-p) \leq \vert 1-p-\lambda \vert+  2
\end{eqnarray}

Hence

\begin{eqnarray*}
\left \vert \left \vert {(S_p- \lambda Id) u^{(F_n)}} \right \vert \right \vert^{\alpha}_{\alpha}
 \leq  D \frac {2 p^{m} + \vert q_{F_n} \vert^{\alpha} \left( \vert 1-p- \lambda \vert +2 \right ) }
      {\vert \vert w^{(F_n)}\vert \vert_{\alpha}^{\alpha}}.
    \end{eqnarray*}
\noindent Since $\vert \vert w^{(F_n)} \vert \vert_{\alpha} $ goes to infinity as $n$ goes to infinity  and $(q_{F_{n}})_{n \geq 0}$ is bounded, it follows
that $\vert \vert {((S_p- \lambda Id) u^{(F_n)})}\vert \vert_{\alpha} $ converge to 0. 
Therefore $\lambda$ belongs to the approximate point spectrum of $S_p$. Thus the spectrum of $S_p$ acting 
on $\ell^{\alpha}\; \alpha >1$,  contains $\mathcal{E}_{p}$.

This finishes the proof for $\ell^{\alpha}\; \alpha >1$. The case $\ell^1$ can be handled in the same way,
the details being left to the reader.

\end{proof}

{\bf Open questions.}
We are not yet able to compute the residual and continuous spectrum of $S_p$ 
acting in the Banach spaces $\ell^{\infty},\; c_{0},\; c$
or  in $\ell^{\alpha},\; \alpha \geq 1$.
We conjecture  that $\sigma(S_{p})= \mathcal{E}_p$. Moreover, in the case of $\ell^{\infty}$  
we conjecture that the residual spectrum  is empty and
 the continuous spectrum is the set $\mathcal{E}_p \setminus \K_{p}.$
 The difficulty here is that the matrix $S_p$ is not bi-stochastic.
 One may also look for a characterization of all real numbers $0 < p <1$ for which $\mathcal{E}_p  \neq \K_{p}.$

\vspace{1em}

\begin{thank}
The first author would like to express a heartful thanks to Albert Fisher, Eduardo Garibaldi,
Paulo Ruffino and Margherita Disertori for stimulating  discussions on the subjet.
It is a pleasure for him to acknowledge the warm hospitality of the
UNESP University in S\~ao Jos\'e of Rio Preto, Campinas University and USP in S\~ao Paulo (Brazil) where a most of this
work has been done.
The second author would like to thanks University of Rouen where a part of this
work has been realized.
\end{thank}





\begin{center}
\includegraphics[width=6cm,height=6cm,keepaspectratio]{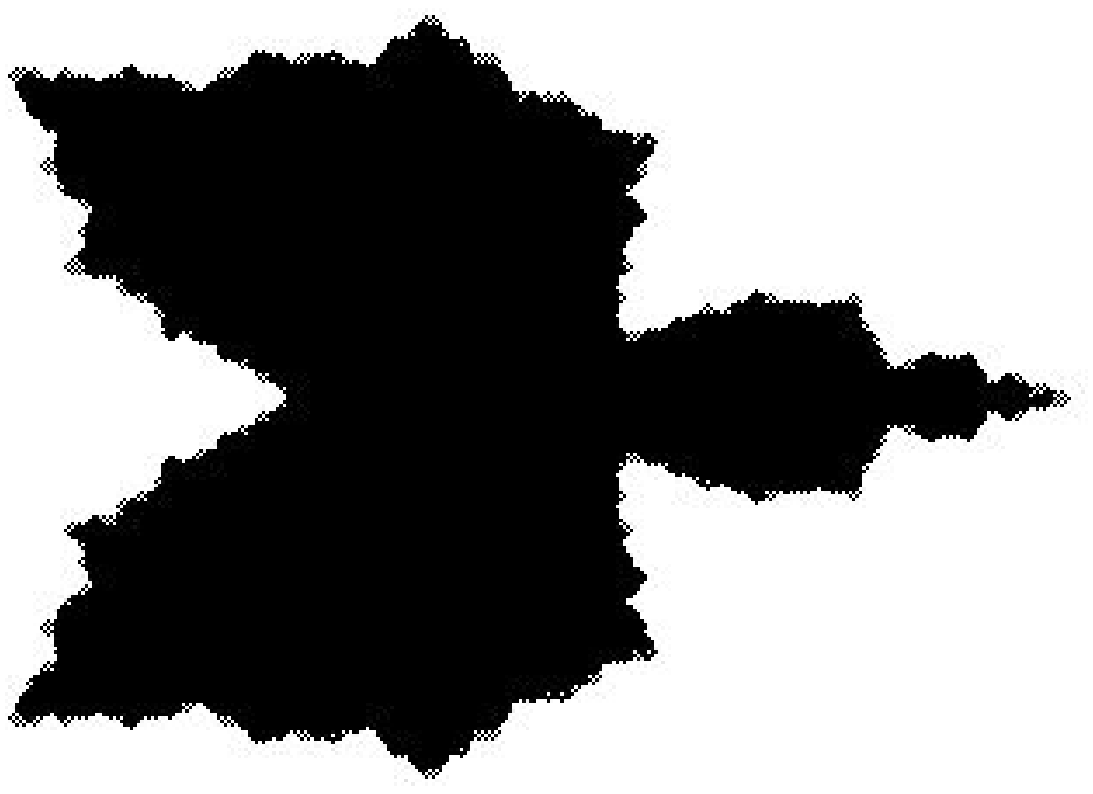}
{\footnotesize \hspace {-13em} { Fig.7. $p=0.625$}}
\end{center}

\begin{center}
\includegraphics[width=6cm,height=6cm,keepaspectratio]{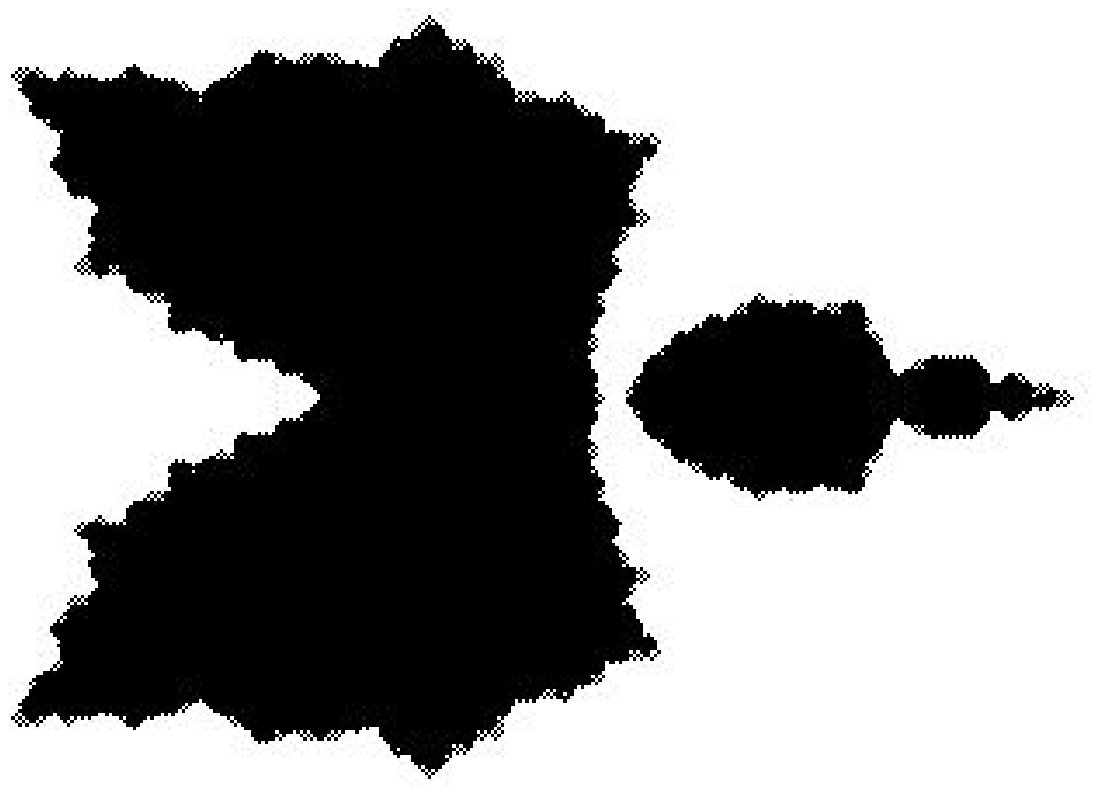}
{\footnotesize \hspace {-13em} { Fig.8. $p=0.621$}}
\end{center}



\end{document}